\newcounter{theorem}
\newtheorem{theorem}{Theorem}
\newtheorem{lemma}{Lemma}
\newtheorem{example}{Example}
\newtheorem{remark}{Remark}
\newtheorem{definition}{Definition}
\newtheorem{corollary}{Corollary}
\newenvironment{proof}[1][Proof]{\textbf{#1.} }{\rule{0.5em}{0.5em}}
\begin{document}
\title{On the Minkowski Measurability \\ of Self-Similar Fractals in $\mathbb{R}^d$ \date{}}

\author{Ali DENİZ   \footnote{Anadolu
University, Science Faculty, Department of Mathematics, 26470,
Eskişehir, Turkey,
\hspace{5cm}e-mails:$\quad$
adeniz@anadolu.edu.tr, skocak@anadolu.edu.tr, yunuso@anadolu.edu.tr,
aeureyen@anadolu.edu.tr}
 \and Şahin KOÇAK $^*$ \and Yunus ÖZDEMİR $^*$ \and Andrei V. RATİU
\footnote{Department of Mathematics, Istanbul Bilgi University, Istanbul, Turkey, \hspace{5cm}e-mail:$\quad$
ratiu@bilgi.edu.tr }
  \and A. Ersin
ÜREYEN $^*$ }


\maketitle
\begin{abstract}
M. Lapidus and C. Pomerance (1990-1993) and K.J. Falconer
(1995) proved that a self-similar fractal in $\mathbb{R}$ is Minkowski-measurable iff it is of non-lattice type. D. Gatzouras (1999) proved that a self-similar fractal in $\mathbb{R}^d$ is Minkowski measurable if it is of non-lattice type (though the actual computation of the content is intractable with his approach) and conjectured that it is not Minkowski measurable if it is of lattice type. Under mild conditions we prove this conjecture and in the non-lattice case we improve his result in the sense that we express the content of the fractal in terms of the residue of the associated $\zeta$-function at the Minkowski-dimension.
\end{abstract}

\textbf{Keywords:} Self-similar fractals, Minkowski measurability, tube formulas

\section{Introduction}
Let \[F=\bigcup \limits_{j=1}^J \varphi_j(F)=:\Phi(F) \subset \mathbb{R}^d\] be a self-similar fractal, where $\varphi_j:\mathbb{R}^d \to \mathbb{R}^d $ are similitudes with scaling ratios $0<r_j<1,\, j=1,2,\dots,J,$ for $J\geq 2$. We assume the iterated function system (IFS) $\Phi$ to satisfy the open set condition, so that the Minkowski dimension $D$ of $F$ is given by the unique real root of the Moran equation $\sum_{j=1}^J r_j^D=1.$

Let $F_{\varepsilon} =\{ x\in \mathbb{R}^d \,|\, {\rm dist}(x,F)\leq \varepsilon\}$ and $V_F(\varepsilon)$ be the $d$-dimensional volume of $F_{\varepsilon}$. $F$ is called Minkowski measurable if the limit \[\mathcal{M}(F):=\lim \limits_{\varepsilon \to 0^+} V_F(\varepsilon) \, \varepsilon^{D-d}\] exists, is finite and different from zero. $\mathcal{M}(F)$ is then called the Minkowski content of $F$.

The IFS $\Phi$ is called of lattice type, if the additive subgroup $\sum_{j=1}^J (\log r_j) \mathbb{Z}$ of $\mathbb{R}$ is discrete and otherwise (i.e. if this subgroup is dense in $\mathbb{R}$) of non-lattice type (see \cite{LaFra}). In the lattice case there is an $r$ with $\log r_j = k_j \log r$, $k_j \in \mathbb{Z}^+$. This dichotomy is decisive for Minkowski measurability of fractals as shown for the one-dimensional case by Lapidus-Pomerance \cite{LaPom1},\cite{LaPom2} and  Falconer \cite{falconerpro}. We now briefly recall their results:

Let $d=1$ and $I$ denote the convex hull of $F$, $I=[F]$. By the open set condition, $\varphi_j (I)$ and $\varphi_k(I)$ are disjoint for $j \neq k$, except possibly at the endpoints. There will emerge $Q \leq J-1$ gaps  on $I$, with lengths $l_q$, $q=1,2,\dots, Q$. Then, $F$ is Minkowski measurable if and only if the IFS $\Phi$ is of non-lattice type, and in case it is measurable the content is given by
\begin{equation} \label{nonlatiscont}
\mathcal{M}=\frac{2^{1-D} \sum_{q=1}^Q l_q^D}{D(1-D) \sum_{j=1}^J r_j^D \log r_j^{-1}} \quad \text{ (see \cite[p.262]{LaFra})}.
\end{equation}
In the lattice case the fractal is not Minkowski measurable, but one can define an average Minkowski content by the formula
\begin{equation}\label{averajcont}
\mathcal{M}_{{\rm av}}=\lim \limits_{T \to \infty} \frac{1}{\log T} \int_{1/T}^1 \varepsilon^{-(1-D)} V_F(\varepsilon) \frac{d\varepsilon}{\varepsilon}  \quad \text{ (see \cite[p.257]{LaFra})}
\end{equation}
and the formula (\ref{nonlatiscont}) gives in this case the average Minkowski content. (In $\mathbb{R}^d$, the exponent $-(1-D)$ of $\varepsilon$ should be replaced by $-(d-D)$).
\begin{figure}[ht]
\centering
\includegraphics[scale=0.75]{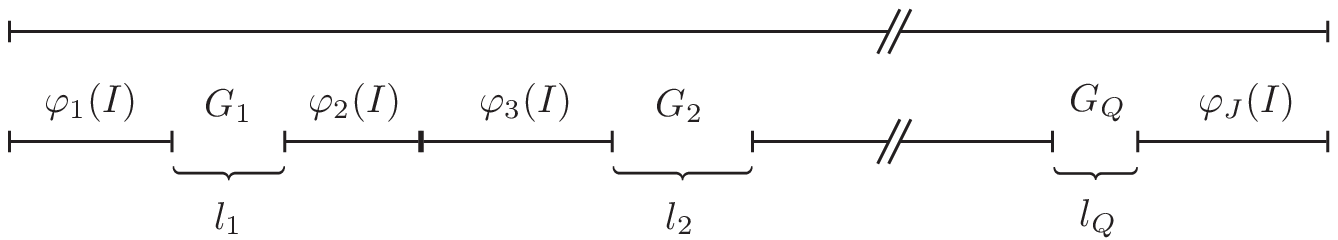}\caption{}
\end{figure}
In higher dimensions, Gatzouras proved in \cite{gatzouras} that a non-lattice self-similar fractal in $\mathbb{R}^d$ (satisfying the open set condition) is Minkowski measurable. He used renewal theory and the formula he gave for the Minkowski content is hardly usable for explicit computations. For lattice case he conjectured that the Minkowski content does not exist. Lapidus and van Frankenhuijsen \cite[Remark 12.19]{LaFra} remarked that renewal theory was unlikely to yield this result but their approach by higher dimensional tube formulas would settle this issue. Our aim in this paper is in a certain sense to carry out this program under some mild additional conditions on the IFS we give below. Using the Lapidus-Pearse theory \cite{lape1} we will give an alternative proof of the existence
of the Minkowski content in non-lattice case with a very explicit and applicable formula for the content and we will prove the non-Minkowski measurability in the lattice case.

Let $C=[F]$ be the convex hull of $F$, for which we assume ${\dim C}=d$. Adopting the approach of Pearse-Winter \cite{pewin} we want to put some additional conditions on the IFS $\Phi$:
\begin{enumerate}
\item[\textbf{TSC}] (Tileset Condition): $\Phi$ satisfies the open set condition with ${\rm int}\,C$ as a feasible open set.
\item[\textbf{NTC}] (Nontriviality Condition): ${\rm int}\,C \nsubseteq \Phi(C)=\bigcup \limits_{j=1}^J \varphi_j(C)$.
\end{enumerate}

Now define $T_0= {\rm C \setminus \Phi (C)}$ and its iterates $T_n=\Phi^{n} (T_0)$, $n=1,2,3, \dots $ (see \cite{pearse}). The tiling of the self-similar system is given by \[ \mathcal{T}:=\{T_n\}_{n=0}^{\infty}.\]
Let $V_{T_n}^-(\varepsilon)$ denote the volume of the inner $\varepsilon$-neighborhood of $T_n$ ( i.e. $\{x \in \overline {T_n} \ | \ {\rm dist} (x , T_n^{c}) \leq \varepsilon \} $ ) and $V_{\mathcal{T}} ^-(\varepsilon):= \sum \limits_{n=0}^{\infty} V_{T_n}^-(\varepsilon)$.

Pearse and Winter prove in \cite{pewin} the following implication: If the above conditions TSC and NTC hold, then the property $\partial C \subset F$ implies $V_F(\varepsilon)=V_{\mathcal{T}} ^-(\varepsilon)+V_C(\varepsilon)-V_C(0)$.
This is extremely important, because there are formulas available for $V_{\mathcal{T}} ^-(\varepsilon)$ (see below, Theorem \ref{ReziduThm}) and this relationship enables one to compute the true volume of the $\varepsilon$-neighborhood of the fractal. We will call this condition the Pearse-Winter condition:
\begin{enumerate}
\item[\textbf{PWC}] (Pearse-Winter Condition): $\partial C \subset F$.
\end{enumerate}

To state the tube formula we need some additional assumptions and definitions.
 Assume that $T_0$ is the union of finitely many (connected) components,
 $T_0=G_1 \cup G_2 \cup \cdots \cup G_Q$, called the generators of the tiling.  We assume the generators to be (diphase) Steiner-like in the following sense of Lapidus-Pearse \cite{lape1}:
 A bounded, open set $G \subset \mathbb{R}^d$ is called (diphase) Steiner-like
if the volume $V^-_G(\varepsilon)$ of the inner
$\varepsilon$-neighborhood of $G$ admits an expression of the form
\begin{equation}\label{polinom}
V^-_{G}(\varepsilon)= \textstyle \sum_{m=0}^{d-1} \kappa_m(G)\varepsilon^{d-m},\qquad \text{ for } \varepsilon < g,
\end{equation}
where  $g$ denotes the  inradius of $G$, i.e. supremum of the radii
of the balls contained in $G$. For $\varepsilon \geq g$ we have
$V_{G}^-(\varepsilon)={\rm{volume}}(G)$ which is denoted by
$-\kappa_d(G)$, the negative sign being conventional \cite{lape1}.

Lapidus-Pearse introduce the following ``scaling
$\zeta$-function'':
\begin{definition}\label{scalingzeta}
The scaling $\zeta$-function of the self-similar fractal is
defined by
\[\zeta(s)=\sum_{n=0}^\infty \sum_{w \in W_n}r_w^s,\] where $W_n$
is the set of words $w=w_1w_2\cdots w_n$ of length $n$ (with
letters from $\{1,2,\dots ,J\}$) and $r_w=r_{w_1}r_{w_2}\dots
r_{w_n}$.
\end{definition}

The above series can be shown to converge for ${\rm {Re}} (s)> D$.
A simple calculation shows that $\zeta(s)$ can be
expressed as \cite[Theorem 2.4]{LaFra}
\begin{equation*}
\zeta(s)=\frac{1}{1-\sum_{j=1}^Jr_j^s} \quad \text{
for } {\rm {Re}} (s)> D.
\end{equation*}
$\zeta(s)$ can then be meromorphically extended to the whole complex plane. We will denote this extension also by $\zeta(s)$.

\begin{definition}
The set $\mathfrak{D}:=\{ \omega \in \mathbb{C} \mid \zeta (s)  \text {
has a pole at } \omega \}$ is called the set of complex dimensions
of the self-similar fractal.
\end{definition}

Lapidus-Pearse define a second type of ``$\zeta$-functions'' associated
with the tiling and related to the geometry of the (diphase) Steiner-like
generators. We assume for simplicity that there is a single generator $G$ (so that $T_0=G$).

\begin{definition}
 The geometric $\zeta$-function $\zeta_{\mathcal{T}}(s,\varepsilon)$ associated with the generator $G$ is defined by \[\zeta_{\mathcal{T}}(s,\varepsilon):=\zeta(s)\, \varepsilon ^{d-s}  \sum_{m=0}^d \frac{g^{s-m}}{s-m}\kappa_m(G).\]
\end{definition}
We now state the formula of Lapidus-Pearse for
$V^-_{\mathcal{T}}(\varepsilon)$ (though we will use a modification of this formula in the proof of our higher dimensional content formulas below):
\begin{theorem}[Tube formula for tilings of self-similar fractals, \cite{lape1}]
\begin{equation*}
V^-_{\mathcal{T}}
(\varepsilon)=\sum_{\omega \in \mathfrak{D}_{\mathcal{T}}}
{\rm{res}} (\zeta_{\mathcal{T}}(s,\varepsilon);\omega),
\end{equation*}
where $\mathfrak{D}_{\mathcal{T}}=\mathfrak{D} \cup \{0,1,\dots,d-1\}$. \label{ReziduThm}
\end{theorem}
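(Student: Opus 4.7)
The plan is to represent the volume of each single tile $\varphi_w(G)$ as a Mellin--Barnes contour integral, sum over all words in all generations to bring in the scaling zeta function $\zeta(s)$, and then shift the contour leftwards to collect all residues. Concretely, the open set condition together with the definition $T_n=\Phi^n(G)$ gives
\[V^-_{\mathcal{T}}(\varepsilon)=\sum_{n=0}^\infty\sum_{w\in W_n}V^-_{\varphi_w(G)}(\varepsilon),\]
and the similarity $\varphi_w$ with ratio $r_w$ yields the scaling identity $V^-_{\varphi_w(G)}(\varepsilon)=r_w^d\,V^-_G(\varepsilon/r_w)$. The Steiner-like hypothesis (\ref{polinom}) then says that $V^-_{\varphi_w(G)}(\varepsilon)$ equals the polynomial $\sum_{m=0}^{d-1}\kappa_m(G)\,r_w^m\,\varepsilon^{d-m}$ for $\varepsilon<gr_w$ and equals the constant $-r_w^d\kappa_d(G)$ for $\varepsilon\geq gr_w$.

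Next I would use Perron's formula
\[\frac{1}{2\pi i}\int_{c-i\infty}^{c+i\infty}\frac{a^s}{s-m}\,ds=a^m\,\mathbf{1}_{\{a>1\}},\qquad c>m,\]
applied with $a=gr_w/\varepsilon$. Summing these identities over $m=0,1,\dots,d$ produces the single integral
\[V^-_{\varphi_w(G)}(\varepsilon)+\kappa_d(G)\,r_w^d=\frac{1}{2\pi i}\int_{c-i\infty}^{c+i\infty}r_w^s\,\varepsilon^{d-s}\sum_{m=0}^d\frac{g^{s-m}}{s-m}\kappa_m(G)\,ds,\]
in which the correction $\kappa_d(G)\,r_w^d$ is precisely what reconciles the two cases $\varepsilon<gr_w$ and $\varepsilon>gr_w$ with the piecewise description of $V^-_{\varphi_w(G)}$. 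Choosing $c>D$ so that $\sum_w r_w^s$ converges absolutely to $\zeta(s)$, Fubini allows us to interchange the double sum over $(n,w)$ with the contour integral and obtain
\[V^-_{\mathcal{T}}(\varepsilon)=\frac{1}{2\pi i}\int_{c-i\infty}^{c+i\infty}\zeta_{\mathcal{T}}(s,\varepsilon)\,ds\;-\;\kappa_d(G)\,\zeta(d).\]

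The final step is to push the contour to $\mathrm{Re}\,s\to-\infty$. Since $\zeta_{\mathcal{T}}(s,\varepsilon)$ is meromorphic with its poles exactly at $\mathfrak{D}$ (from $\zeta$) and at $s=0,1,\dots,d$ (from the factors $1/(s-m)$), the residue theorem delivers $\sum_{\omega\in\mathfrak{D}\cup\{0,1,\dots,d\}}\mathrm{res}(\zeta_{\mathcal{T}}(s,\varepsilon);\omega)$. A direct computation gives $\mathrm{res}(\zeta_{\mathcal{T}}(\cdot,\varepsilon);d)=\kappa_d(G)\,\zeta(d)$, which cancels the correction term, so the remaining sum runs exactly over $\mathfrak{D}\cup\{0,1,\dots,d-1\}=\mathfrak{D}_{\mathcal{T}}$, as claimed.

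The main obstacle is to justify the contour shift rigorously: one must establish polynomial (``languid'') growth estimates on $\zeta(s)$ along a suitable sequence of horizontal lines $|\mathrm{Im}\,s|=T_k\to\infty$ so that the horizontal pieces of the closing contours vanish in the limit, and one must control the accumulation of poles of $\zeta$ in the lattice case, where the zeros of $1-\sum_j r_j^s$ form an unbounded periodic array in a vertical strip. This is the technical core of the Lapidus--van Frankenhuijsen explicit-formula machinery as adapted by Lapidus and Pearse to the tiling setting.
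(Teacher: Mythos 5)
Your first half is essentially sound, and it is worth noting how it sits relative to the paper: the paper never proves Theorem \ref{ReziduThm} itself (it is quoted from \cite{lape1}, with a pointwise proof in \cite{bizim}); what the paper proves is only the contour representation of Lemma \ref{yunus}, $V^-_{\mathcal{T}}(\varepsilon)=\frac{1}{2\pi{\bf i}}\int_{c-{\bf i}\infty}^{c+{\bf i}\infty}\zeta_{\mathcal{T}}(s,\varepsilon)\,ds$ with $D<c<d$, obtained by computing the Mellin transform of $V^-_{\mathcal{T}}(\varepsilon)/\varepsilon^d$ in the strip $D<{\rm Re}(s)<d$, and it then deliberately avoids the full residue expansion by shifting only to the screen $\Gamma$ of Lemmas \ref{strip} and \ref{screen} just left of $D$. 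Your variant via Perron's formula gives the same representation but with two slips to fix: the identity $\frac{1}{2\pi i}\int_{c-i\infty}^{c+i\infty}\frac{a^s}{s-m}\,ds=a^m\mathbf{1}_{\{a>1\}}$ needs $c>m$ for every $m\le d$, hence $c>d$, so ``choosing $c>D$'' as written is not enough (with $D<c<d$ the $m=d$ term flips); and each termwise Perron integral is only conditionally convergent, so the Fubini interchange really rests on the $O(|s|^{-2})$ decay of $\sum_{m=0}^d\kappa_m g^{s-m}/(s-m)$ on vertical lines, i.e.\ on the continuity of $V^-_G$ at $\varepsilon=g$ (the paper's Remark \ref{remark1}), which you should invoke explicitly. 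Your bookkeeping of the correction $-\kappa_d(G)\zeta(d)$ against ${\rm res}(\zeta_{\mathcal{T}};d)=\kappa_d(G)\zeta(d)$ is correct and explains why $d\notin\mathfrak{D}_{\mathcal{T}}$.

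The genuine gap is that the step which constitutes the theorem is only named, not carried out: converting the line integral into the convergent sum of residues over \emph{all} of $\mathfrak{D}\cup\{0,\dots,d-1\}$. Concretely, one must (i) choose vertical screens ${\rm Re}(s)=\sigma_n\to-\infty$ avoiding poles on which $\zeta$ is bounded in the right way, (ii) kill the horizontal contributions along a sequence $|{\rm Im}(s)|=T_k\to\infty$, and (iii) show the vertical error tends to zero and the resulting infinite residue sum converges. Since $|\zeta(s)|\sim r_1^{-\sigma}$ as $\sigma={\rm Re}(s)\to-\infty$, the integrand on ${\rm Re}(s)=\sigma$ is of size comparable to $(\varepsilon/(gr_1))^{-\sigma}$, so the naive limit $\sigma\to-\infty$ only works for $\varepsilon$ small; in general one needs the ``strongly languid'' explicit-formula machinery of Lapidus--van Frankenhuijsen as adapted in \cite{lape1}, or the pointwise argument of \cite{bizim}. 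Moreover the bounds of Lemma \ref{strip} hold only in a strip near $D$, so the convergence of $\sum_{\omega\in\mathfrak{D}}{\rm res}(\zeta_{\mathcal{T}};\omega)$ requires separate residue bounds and a pole count deep in the left half-plane. Finally, you have the dichotomy backwards: in the lattice case the poles form an explicit periodic array on finitely many vertical lines (the tractable situation exploited in the paper's Section 3), whereas it is the non-lattice case in which the pole distribution is delicate, with poles approaching the line ${\rm Re}(s)=D$; this is exactly why the paper constructs the screen $\Gamma$ and works with a partial expansion rather than with Theorem \ref{ReziduThm} itself.
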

\begin{remark}
Lapidus-Pearse gives in \cite{lape1} a distributional proof for this formula. For a pointwise proof see \cite{bizim}.
\end{remark}
\section{Main Results}
Our main result is the following theorem:
\begin{theorem}\label{thmcontent}
Let $F=\Phi(F)=\bigcup \limits_{j=1}^{J} \varphi_j(F)$ be a self-similar fractal in $\mathbb{R}^d$ with ${\rm dim} [F]=d$ and the contractivity ratios of the similitudes $\{\varphi_j\}$ being $\{r_j\}$.

We assume the Tileset Condition, the Nontriviality Condition and the Pearse-Winter Condition to hold (see TSC, NTC and PWC in the former section). We additionally assume $D > d-1$, where $D$ is the Minkowski dimension of $F$.

Under these assumptions the followings hold:
\begin{enumerate}
\item[I.] If the IFS $\Phi$ is of non-lattice type, then $F$ is Minkowski measurable with Minkowski content
\begin{eqnarray*}
\mathcal{M}(F)={\rm res}\left ( \zeta_{\mathcal{T}}(s,\varepsilon)\,\varepsilon^{s-d};D \right )&=&{\rm res}\left (\zeta(s);D \right ) \sum \limits_{m=0}^d \frac{g^{D-m}}{D-m}\kappa_m(G)\\
&=&\frac{\sum\limits_{m=0}^d \frac{g^{D-m}}{D-m}\kappa_m(G)}{\sum\limits_{j=1}^J  r_j^D \log r_j^{-1} },
\end{eqnarray*}
\item[II.] If the IFS $\Phi$ is of lattice type then $F$ is not Minkowski measurable. The average Minkowski content as defined in (\ref{averajcont}) exists and equals \[\mathcal{M}_{{\rm av}}(F)= {\rm res} \left ( \zeta_{\mathcal{T}}(s,\varepsilon) \, \varepsilon^{s-d};D \right ).\]
\end{enumerate}
\end{theorem}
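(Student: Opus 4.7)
The strategy is to reduce the problem to the analysis of $V_{\mathcal{T}}^-(\varepsilon)$ and then apply the tube formula of Lapidus--Pearse (Theorem \ref{ReziduThm}) term by term. First I would invoke the Pearse--Winter identity, which under TSC, NTC and PWC gives
\[
V_F(\varepsilon) \,\varepsilon^{D-d}
= V_{\mathcal{T}}^-(\varepsilon)\,\varepsilon^{D-d}
+ \bigl(V_C(\varepsilon)-V_C(0)\bigr)\,\varepsilon^{D-d}.
\]
Since $C$ is a convex body of full dimension, Steiner's formula gives $V_C(\varepsilon)-V_C(0)=O(\varepsilon)$ as $\varepsilon\to 0^+$, so the second term is $O(\varepsilon^{D-d+1})$ and vanishes in the limit precisely under the standing hypothesis $D>d-1$. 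Thus both parts of the theorem are reduced to understanding the asymptotic/averaged behavior of $V_{\mathcal{T}}^-(\varepsilon)\,\varepsilon^{D-d}$.

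Next I would apply Theorem \ref{ReziduThm} (using the pointwise version alluded to in the Remark after it) and rewrite it, after multiplication by $\varepsilon^{D-d}$, as
\[
V_{\mathcal{T}}^-(\varepsilon)\,\varepsilon^{D-d}
=\sum_{\omega\in\mathfrak{D}_{\mathcal{T}}}
\operatorname{res}\!\bigl(\zeta_{\mathcal{T}}(s,\varepsilon)\,\varepsilon^{s-d};\omega\bigr)\cdot \varepsilon^{D-\omega}\text{-type factor.}
\]
Concretely, each residue of $\zeta_{\mathcal{T}}(s,\varepsilon)\,\varepsilon^{s-d}$ at a pole $\omega$ is a constant (depending on $G$ and on the residue data of $\zeta$) times $\varepsilon^{D-\omega}$, possibly multiplied by a polynomial in $\log\varepsilon$ if $\omega$ is a multiple pole. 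Among the ``trivial'' poles $\{0,1,\dots,d-1\}$ of the geometric factor $\sum_m \frac{g^{s-m}}{s-m}\kappa_m(G)$, the corresponding contribution is $O(\varepsilon^{D-m})$ with $m\le d-1<D$; so all of these tend to $0$. The remaining contributions come from the scaling complex dimensions $\mathfrak{D}$, i.e.\ from the zeros of $1-\sum_j r_j^s$.

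In the non-lattice case I would invoke the standard fact (see \cite{LaFra}) that $D$ is the unique zero of $1-\sum_j r_j^s$ on the vertical line $\operatorname{Re}(s)=D$, and every other complex dimension $\omega\in\mathfrak{D}$ satisfies $\operatorname{Re}(\omega)<D$. Hence every residue other than the one at $\omega=D$ contributes a term of magnitude $\varepsilon^{D-\operatorname{Re}(\omega)}\to 0$, while the residue at $\omega=D$ contributes the constant
\[
\operatorname{res}\!\bigl(\zeta_{\mathcal{T}}(s,\varepsilon)\,\varepsilon^{s-d};D\bigr)
=\operatorname{res}(\zeta(s);D)\sum_{m=0}^d\frac{g^{D-m}}{D-m}\kappa_m(G),
\]
and a direct computation of $\operatorname{res}(\zeta(s);D)=1/\sum_j r_j^D\log r_j^{-1}$ yields the stated formula.

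In the lattice case, $1-\sum_j r_j^s$ has simple zeros at $D+ik\mathbf{p}$ ($k\in\mathbb{Z}$) with $\mathbf{p}=2\pi/\log r^{-1}$, and a short computation shows that the residues of $\zeta$ at these points all coincide. Collecting the contributions on $\operatorname{Re}(s)=D$ yields
\[
V_{\mathcal{T}}^-(\varepsilon)\,\varepsilon^{D-d}
= \operatorname{res}\!\bigl(\zeta_{\mathcal{T}}(s,\varepsilon)\,\varepsilon^{s-d};D\bigr)
+ \Psi(\log \varepsilon^{-1}) + o(1),
\]
where $\Psi$ is a non-constant $\mathbf{p}$-periodic function obtained by summing the contributions from $k\neq 0$. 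The non-constancy of $\Psi$ (inherited from the non-triviality of the Steiner-like data of $G$) rules out a limit, giving non-measurability, while the substitution $u=\log\varepsilon^{-1}$ in \eqref{averajcont} turns the Cesaro average into $\tfrac{1}{\log T}\int_0^{\log T}[\text{const}+\Psi(u)+o(1)]\,du$, which tends to the constant residue term because $\int_0^{\log T}\Psi(u)\,du$ is bounded.

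The main technical obstacle is not the residue bookkeeping itself but the pointwise legitimacy of the termwise asymptotic expansion: one needs a screen/growth estimate for $\zeta_{\mathcal{T}}(s,\varepsilon)$ in a left half-plane so that the error terms from poles with $\operatorname{Re}(\omega)<D$ really do combine into an $o(1)$ uniformly in $\varepsilon$, and in the lattice case so that the oscillatory series defining $\Psi$ converges to a genuine non-constant periodic function. For this I would lean on the pointwise tube formula of \cite{bizim} and on the standard non-lattice/lattice dichotomy for the zeros of $1-\sum_j r_j^s$ from \cite{LaFra}.
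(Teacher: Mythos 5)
Your reduction via the Pearse--Winter identity and the Steiner polynomial is exactly the paper's first step, and your residue bookkeeping at $s=D$ is correct. But the heart of the matter is the point you flag and then defer: in the non-lattice case the complex dimensions $\omega\in\mathfrak{D}$ with $\operatorname{Re}(\omega)<D$ form an infinite set whose real parts accumulate at $D$ (this is precisely why the paper calls the non-lattice case ``more difficult''), so the observation that each individual term $\varepsilon^{D-\operatorname{Re}(\omega)}$ tends to $0$ proves nothing about the sum. Leaning on the pointwise tube formula of \cite{bizim} does not close this gap: that result gives you the residue expansion, not the asymptotic dominance of the $D$-term. What is actually needed --- and what constitutes most of the paper's proof --- is (a) a uniform bound on $|\operatorname{res}(\zeta;\omega)|$ for all poles in a strip $\widetilde D<\operatorname{Re}(s)<D$ (the paper's Lemma \ref{strip}, proved by introducing the auxiliary exponent $\widetilde D$ solving $\sum_{j=1}^{J-1}r_j^{\widetilde D}=1$ and showing $\operatorname{Re}(f'(\omega))\ge\log r_1^{-1}$ there); (b) the decay estimate $\bigl|\sum_m\kappa_m g^{\omega-m}/(\omega-m)\bigr|\le K/|\omega|^2$, which rests on the non-obvious fact that the numerator polynomial has degree at most $d-1$ because $V_G^-$ is continuous at $\varepsilon=g$; (c) a linear density bound on $\mathfrak{D}$ in horizontal strips; and (d) the explicit construction of a screen $\Gamma$ on which $\zeta$ is uniformly bounded (Lemma \ref{screen}), so that the contour-shifted error integral is $O(\varepsilon^{d-\sigma_R})$ with $\sigma_R<D$. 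Only the combination of (a)--(c) lets one split the residue sum into a finite part that is $O(\varepsilon^\delta)$ and a tail that is small uniformly in $\varepsilon$ (Lemma \ref{lemma6}). Note also that the paper does not apply Theorem \ref{ReziduThm} directly but re-derives the expansion from the Mellin representation of $V_{\mathcal{T}}^-/\varepsilon^d$ and shifts the contour onto $\Gamma$; your route through the tube formula is workable in principle but still requires all of (a)--(d).

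Two smaller but genuine omissions: you never argue that the claimed content $\sum_{m=0}^d\frac{g^{D-m}}{D-m}\kappa_m(G)$ is nonzero, which is required for Minkowski measurability (the paper gets this for free from the identity $\sum_m\kappa_m g^{D-m}/(D-m)=\int_0^\infty V_G^-(\varepsilon)\,\varepsilon^{D-d-1}\,d\varepsilon>0$); and in the lattice case the non-constancy of your periodic function $\Psi$ is asserted, not proved --- the paper establishes it by showing that the Fourier coefficients $a_n$ are values of $g^{s-d}P(s)/Q(s)$ at $s=D+\mathbf{i}np$ with $\deg P\le d-1$, so at most $d-1$ of them can vanish, while $\sum_n|a_n|<\infty$ guarantees the series represents a genuine continuous non-constant periodic function.
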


\begin{remark}
In case of multiple generators, let $\zeta_{\mathcal{T}}^q(s,\varepsilon) \,\,(q=1,2,\dots,Q)$ denote the geometric zeta function corresponding to generator $G_q$ and define the total geometric zeta function by
\begin{eqnarray*}
\zeta_{\mathcal{T}}(s,\varepsilon) = \sum \limits_{q=1}^Q \zeta_{\mathcal{T}}^q(s,\varepsilon)&=& \zeta(s) \,\varepsilon^{d-s} \sum\limits_{q=1}^Q  \sum \limits_{m=0}^d \frac{g_q^{s-m}}{s-m}  \kappa_m(G_q),
\end{eqnarray*}
where $g_q$ denotes the inradius of $G_q$. Then
\begin{enumerate}
\item[I.] If the IFS $\Phi$ is of non-lattice type then $F$ is Minkowski measurable with Minkowski content \[
\mathcal{M}(F)={\rm res} \left ( \zeta_{\mathcal{T}}(s,\varepsilon) \, \varepsilon^{s-d};D \right )=\frac{ \sum \limits_{q=1}^Q  \sum \limits_{m=0}^d \frac{g_q^{D-m}}{D-m}  \kappa_m(G_q)}{\sum\limits_{j=1}^J  r_j^D \log r_j^{-1} },
\]
\item[II.] If the IFS $\Phi$ is of lattice type then $F$ is not Minkowski measurable and the average Minkowski content exists and equals
    \[\mathcal{M}_{{\rm av}}(F)= \sum \limits_{q=1}^Q {\rm res} \left ( \zeta_{\mathcal{T}}^q(s,\varepsilon) \, \varepsilon^{s-d};D \right ).\]
\end{enumerate}
\end{remark}

\begin{corollary}
If we specialize to the dimension $d=1$, we obtain the formula (\ref{nonlatiscont}): Since each $G_q$ is an interval of length $l_q$ we have \[ V^-_{G_q}(\varepsilon)=\left \{
\begin{array}{cc}
 2 \varepsilon  &,\text{ for }  \, \varepsilon < g_q \\
 l_q &, \text{ for }\, \varepsilon \geq g_q,
\end{array}
\right .\] so that $\kappa_0(G_q)=2, \, \kappa_1(G_q)=-l_q$ and the inradius $g_q$ equals $l_q/2$. Then
\[\zeta_{\mathcal{T}}^q(s,\varepsilon) = \zeta(s)\, \varepsilon^{1-s} \left ( 2 \frac{g_q^s}{s}-l_q \frac{g_q^{s-1}}{s-1}\right ) = \zeta(s)\,  \varepsilon^{1-s}  \frac{2^{1-s} l_q^s}{s(1-s)},\]
and  \[\mathcal{M}(F)={\rm res} \left ( \zeta(s)\,\frac{2^{1-s} }{s(1-s)}\sum \limits_{q=1}^Q l_q^s  ;D\right )= \frac{ 2^{1-D} \sum \limits_{q=1}^Q l_q^D }{D(1-D) \sum \limits_{j=1}^J  r_j^D \log r_j^{-1} }. \]
\end{corollary}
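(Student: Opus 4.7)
The plan is to verify the corollary by applying the multi-generator version of Theorem \ref{thmcontent} in dimension $d=1$ and checking that the residue at $s=D$ collapses to the Lapidus-Pomerance-Falconer formula (\ref{nonlatiscont}). First I would record the Steiner-like data of each generator: in dimension one each component $G_q$ of $T_0=C\setminus \Phi(C)$ is an open interval whose length is one of the gap lengths $l_q$, and its inner $\varepsilon$-neighborhood has measure $\min(2\varepsilon,\,l_q)$. Comparing with (\ref{polinom}) reads off the Steiner-like coefficients $\kappa_0(G_q)=2$ and $\kappa_1(G_q)=-l_q$, together with the inradius $g_q=l_q/2$.

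Next I would substitute these into the generator's geometric $\zeta$-function
\[\zeta_{\mathcal{T}}^q(s,\varepsilon)=\zeta(s)\,\varepsilon^{1-s}\sum_{m=0}^{1}\frac{g_q^{s-m}}{s-m}\kappa_m(G_q)=\zeta(s)\,\varepsilon^{1-s}\Bigl(\frac{2g_q^{s}}{s}-\frac{l_q g_q^{s-1}}{s-1}\Bigr),\]
and use $g_q=l_q/2$ to collapse the bracket to $2^{1-s}l_q^s/(s(1-s))$, giving the clean product form displayed in the corollary.

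Finally I would invoke the multi-generator version of Theorem \ref{thmcontent}: summing over $q$ and forming the residue at $s=D$, the factor $\varepsilon^{s-1}\cdot\varepsilon^{1-s}=1$ absorbs all $\varepsilon$-dependence, so
\[\mathcal{M}(F)={\rm res}\Bigl(\zeta(s)\,\frac{2^{1-s}}{s(1-s)}\sum_{q=1}^{Q}l_q^s;\,D\Bigr)={\rm res}(\zeta(s);D)\cdot\frac{2^{1-D}\sum_{q=1}^{Q}l_q^D}{D(1-D)}.\]
Computing ${\rm res}(\zeta(s);D)$ from the closed form $\zeta(s)=1/(1-\sum_j r_j^s)$ by L'H\^{o}pital's rule at the simple zero of the denominator gives $1/\sum_j r_j^D\log r_j^{-1}$, and assembling the factors recovers (\ref{nonlatiscont}). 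No genuine obstacle is expected; the only items requiring care are the sign convention ($\kappa_d$ is negative by the paper's convention) and the one-line algebraic collapse of the bracket.
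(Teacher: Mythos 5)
Your computation is correct and coincides with the paper's own (the corollary in the paper is proved by exactly this substitution of $\kappa_0=2$, $\kappa_1=-l_q$, $g_q=l_q/2$ into the multi-generator residue formula, followed by the same algebraic collapse of the bracket and the evaluation ${\rm res}(\zeta(s);D)=1/\sum_j r_j^D\log r_j^{-1}$). Nothing further is needed.
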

\begin{example}
Let $\triangle ABC$ be an acute triangle with corresponding sides $a,b$ and $c$. Let $\triangle A'B'C'$ be its orthic (pedal) triangle  (see Fig.\ref{Fig:ex1}a).
The triangles $\triangle AC'B'$, $\triangle BA'C'$ and $\triangle CB'A'$ are scaled copies of the original triangle $\triangle ABC$ with scaling ratios $\cos A, \cos B$ and $\cos C$ (denoting the angle at the vertices $A,B,C$ again with the same letter).
Consider the collection of these maps as an iterated function system $\Phi=\{\varphi_j\}_{j=1}^{3}$ on $\mathbb{R}^2$  as indicated in Fig.\ref{Fig:ex1}b. The associated self-similar fractal (``orthic fractal") is shown in Fig.\ref{Fig:ex1}c (see \cite{zhang}). The Minkowski dimension $D$ is determined by $(\cos A)^D+(\cos B)^D+(\cos C)^D=1$. This system satisfies the TSC, NTC and PWC, and has a single generator $G=\triangle A'B'C'$.

The volume of the inner $\varepsilon$-neighborhood of the generator $G$ is given by
\begin{equation*}
V_G(\varepsilon) = \left \{
\begin{array}{ccc}
  \kappa_1(G) \varepsilon+ \kappa_0(G) \varepsilon ^2 & , & \text{ for } \varepsilon \leq g \\ \\
 -\kappa_2(G) & , & \text{ for }\varepsilon \geq g
\end{array}
\right .
\end{equation*}
where \[g=\frac{4 {\rm Area}(\triangle ABC) \cos A \cos B\cos C }{a \cos A+b \cos B +c \cos C},\]
and $\kappa_0(G)=-(\tan A+\tan B+\tan C),\kappa_1(G)=a \cos A + b \cos B+ c \cos C$, $\kappa_2(G)=-2 \cos A \cos B\\ \cos C {\rm Area}(\triangle ABC)$.
\begin{figure}[ht]
\centering
\includegraphics[height=4.5cm]{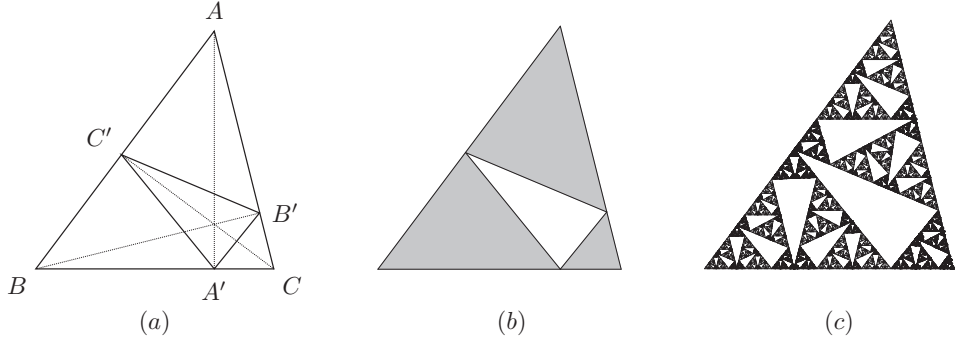}
\caption{($a$) An acute triangle $\triangle ABC$ with its orthic triangle $\triangle A'B'C'$, ($b$) the IFS, ($c$) its attractor.}\label{Fig:ex1}
\end{figure}
Depending on the angles $A,B,C$, the orthic fractal may be of lattice or non-lattice type. If it is of non-lattice type, then by Teorem \ref{thmcontent}.I, its Minkowski content exists and is given by
\[-\frac{\left( \frac{g^{D}}{D} \kappa_0(G)+\frac{g^{D-1}}{D-1} \kappa_1(G)+\frac{g^{D-2}}{D-2} \kappa_2(G)\right)}{(\cos A)^D \log (\cos A) + (\cos B)^D \log (\cos B)+(\cos C)^D \log (\cos C)}.\] If the orthic fractal is of lattice type then, by Teorem \ref{thmcontent}.II, the Minkowski content does not exist, but the average content exists and is given by the same expression.
\end{example}
\section{Proof of Theorem \ref{thmcontent}} We consider first the more difficult non-lattice case (because in that case the distribution of the poles of the $\zeta$-function could be utterly complicated). By the assumptions of the theorem we have \[V_F(\varepsilon) = V_{\mathcal{T}}^- (\varepsilon) + V_C(\varepsilon) -V_C(0), \text{ where } C=[F].\] We have to consider the limit behaviour of $V_F(\varepsilon) \,\varepsilon ^{D-d}$ as $\varepsilon $ tends to zero.

By the well-known Steiner formula, the volume of the $\varepsilon$-neighborhood of a bounded convex set in $\mathbb{R}^d$ can be expressed as a polynomial in $\varepsilon$ \cite{schneider}:
\[V_C(\varepsilon) =\textstyle \sum_{m=0}^d a_m \varepsilon^m \quad \text{ with } a_0 = V_C(0).\] Hence, $\lim \limits_{\varepsilon\to 0^+} (V_C(\varepsilon)-V_C(0))\, \varepsilon^{D-d}=0$ (by the assumption $D>d-1$). Thus, our concern will be the term $V_{\mathcal{T}}^-(\varepsilon)\,\varepsilon^{D-d}$.

To make the proof transparent, we will formulate several lemmas, whose proofs we defer to the next section.

\begin{lemma}\label{yunus} For any $c$ satisfying $D < c < d$,
\[
V_{\mathcal{T}}^{-} (\varepsilon)=\frac{1}{2 \pi {\bf i}}\int_{c-\bf{i} \infty}^{c+\bf{i} \infty}  \zeta_{\mathcal{T}}(s,\varepsilon) ds.
\]
\end{lemma}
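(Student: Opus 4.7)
The plan is to recognize the right-hand side as a classical inverse Mellin transform and to assemble it from the analogous identity for a single tile. Write $h(s):=\sum_{m=0}^{d}\frac{g^{s-m}}{s-m}\kappa_m(G)$, so that $\zeta_{\mathcal{T}}(s,\varepsilon)=\zeta(s)\,\varepsilon^{d-s}h(s)$. Splitting $[0,\infty)=[0,g)\cup[g,\infty)$ and using (\ref{polinom}) on the first piece together with $V_G^-(\varepsilon)=-\kappa_d(G)$ on the second, a direct integration gives
\[
\int_0^\infty V_G^-(\varepsilon)\,\varepsilon^{s-d-1}\,d\varepsilon \;=\; h(s),
\]
with absolute convergence precisely in the strip $d-1<{\rm Re}\,s<d$: the $m\le d-1$ terms force ${\rm Re}\,s>d-1$ near $0$, and the tail on $[g,\infty)$ forces ${\rm Re}\,s<d$.

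Next I pass to the scaled tiles $r_wG$. The scaling relation $V_{r_wG}^-(\varepsilon)=r_w^d V_G^-(\varepsilon/r_w)$ combined with the change of variable $u=\varepsilon/r_w$ yields
\[
\int_0^\infty V_{r_wG}^-(\varepsilon)\,\varepsilon^{s-d-1}\,d\varepsilon \;=\; r_w^s\,h(s).
\]
Since all integrands are non-negative, Tonelli permits term-by-term summation over every word $w\in\bigcup_{n\ge 0}W_n$; combined with the series $\zeta(s)=\sum_{n}\sum_{w\in W_n}r_w^s$, convergent for ${\rm Re}\,s>D$, this produces
\[
\int_0^\infty V_{\mathcal{T}}^-(\varepsilon)\,\varepsilon^{s-d-1}\,d\varepsilon \;=\; \zeta(s)\,h(s)
\]
with absolute convergence throughout $\max\{D,d-1\}<{\rm Re}\,s<d$, which, by the hypothesis $D>d-1$, is exactly $D<{\rm Re}\,s<d$.

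For any $c$ in this strip, Mellin inversion recovers $V_{\mathcal{T}}^-(\varepsilon)/\varepsilon^d$ as $\tfrac{1}{2\pi{\bf i}}\int_{c-{\bf i}\infty}^{c+{\bf i}\infty}\zeta(s)h(s)\varepsilon^{-s}\,ds$, and multiplying by $\varepsilon^d$ delivers the claim. The main obstacle is legitimizing this inversion pointwise, which reduces to showing $s\mapsto\zeta_{\mathcal{T}}(s,\varepsilon)$ is integrable on the vertical line ${\rm Re}\,s=c$. Boundedness of $\zeta(s)=(1-\sum_jr_j^s)^{-1}$ there is immediate from $|\sum_j r_j^s|\le\sum_j r_j^c<1$ when $c>D$. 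The delicate point is the decay of $h(s)$ as $|{\rm Im}\,s|\to\infty$. Expanding $\tfrac{1}{s-m}=\tfrac{1}{s}+\tfrac{m}{s^2}+O(|s|^{-3})$ gives
\[
h(s)\;=\;\frac{g^s}{s}\sum_{m=0}^{d}\kappa_m(G)g^{-m}\;+\;\frac{g^s}{s^2}\sum_{m=0}^{d}m\,\kappa_m(G)g^{-m}\;+\;O(|s|^{-3}),
\]
and continuity of $V_G^-$ at the inradius $\varepsilon=g$ forces the identity $\sum_{m=0}^d\kappa_m(G)g^{d-m}=0$, wiping out the leading $1/s$ term. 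Since $|g^s|=g^c$ on the line, this gives $h(s)=O(|s|^{-2})$; combined with the boundedness of $\zeta$, this ensures $\zeta_{\mathcal{T}}(\cdot,\varepsilon)\in L^1$ on the contour and Mellin inversion applies, completing the proof.
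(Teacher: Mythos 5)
Your argument is correct and follows essentially the same route as the paper: compute the Mellin transform of $V_{rG}^-(\varepsilon)/\varepsilon^d$ for a single scaled tile, sum over all words using the scaling zeta function, and invert. The extra details you supply (Tonelli for the interchange, and the $O(|s|^{-2})$ decay of $h(s)$ on the line via the continuity identity $\sum_{m=0}^{d}\kappa_m(G)g^{d-m}=0$) are exactly the technical justifications the paper omits by citing \cite{arsiv} and records separately in Remark \ref{remark1}.
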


Now we want to convert this integral into an appropriate sum of residues of $\zeta_{\mathcal{T}}(s,\varepsilon)$ plus an integral on a path $\Gamma$ lying to the left of the line ${\rm Re}(s)=D$.

For the construction of this path $\Gamma$ we need the following two lemmas. For convenience we assume that the contractivity ratios are ordered as \[1>r_1\geq r_2 \geq \cdots \geq r_J>0.\]
\begin{lemma}\label{strip}
There exists $\widetilde D < D$ such that all  the poles of $\zeta(s)$ in the strip $\{s \ |\ \widetilde D <{\rm Re}(s) < D \}$ are simple and the residues of $\zeta(s)$ at these poles are bounded by $1/\log r_1^{-1}$.
\end{lemma}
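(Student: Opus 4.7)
The plan is to translate the claim into one about the entire function $f(s):=1-\sum_{j=1}^{J}r_{j}^{s}$. Since $\zeta(s)=1/f(s)$, the poles of $\zeta$ are exactly the zeros of $f$; a zero $\omega$ is simple precisely when $f'(\omega)\neq 0$, and in that case ${\rm res}(\zeta;\omega)=1/f'(\omega)$. The whole lemma therefore reduces to exhibiting $\widetilde D<D$ such that $|f'(\omega)|\ge\log r_{1}^{-1}$ for every zero $\omega$ of $f$ in the open strip $\widetilde D<{\rm Re}(s)<D$.

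The key step exploits the zero relation $\sum_{j}r_{j}^{\omega}=1$: multiplying it by $\log r_{1}^{-1}$ and subtracting from the definition of $f'(\omega)$ gives
\[ f'(\omega)=\log r_{1}^{-1}+\sum_{j=2}^{J}r_{j}^{\omega}\bigl(\log r_{j}^{-1}-\log r_{1}^{-1}\bigr), \]
whose coefficients are nonnegative thanks to the assumed ordering $r_{1}\geq r_{j}$. Writing $\omega=\sigma+it$ and using the real-part relation $\sum_{j}r_{j}^{\sigma}\cos(t\log r_{j})=1$, a short manipulation produces
\[ {\rm Re}\,f'(\omega)=f'(\sigma)-\sum_{j=1}^{J}r_{j}^{\sigma}\bigl(1-\cos(t\log r_{j})\bigr)\log r_{j}^{-1}\geq f'(\sigma)-\log r_{J}^{-1}\Bigl(\textstyle\sum_{j}r_{j}^{\sigma}-1\Bigr), \]
a lower bound depending only on $\sigma$. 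As $\sigma\to D^{-}$ the right-hand side tends to $f'(D)\geq\log r_{1}^{-1}$, the latter inequality being immediate from $\sum_{j}r_{j}^{D}=1$ together with $\log r_{j}^{-1}\geq\log r_{1}^{-1}$. Choosing $\widetilde D$ sufficiently close to $D$ thus forces ${\rm Re}\,f'(\omega)\geq\log r_{1}^{-1}$ throughout $(\widetilde D,D)$, so $|f'(\omega)|\geq|{\rm Re}\,f'(\omega)|\geq\log r_{1}^{-1}$ everywhere in the strip, simultaneously giving simplicity ($f'(\omega)\neq 0$) and the residue bound.

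The genuine obstacle is that in the non-lattice case the zeros of $f$ may accumulate on the line ${\rm Re}(s)=D$ (though none reach it apart from $D$ itself), so the chosen strip can contain infinitely many poles and a naive continuity-plus-compactness argument in a mere neighborhood of $D$ is insufficient. The step above sidesteps this precisely because all dependence on ${\rm Im}(\omega)=t$ has been absorbed into the scalar identity $\sum_{j}r_{j}^{\sigma}\cos(t\log r_{j})=1$ forced by $f(\omega)=0$, making the estimate uniform in $t$; no Diophantine or almost-periodic information about the distribution of zeros is required beyond this one algebraic relation. Note finally that in the degenerate case $f'(D)=\log r_{1}^{-1}$ all $r_{j}$ coincide, whence every zero of $f$ lies on ${\rm Re}(s)=D$ and the strip is empty, so the statement holds vacuously.
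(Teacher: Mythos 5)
Your argument is correct, and it reaches the paper's target inequality ${\rm Re}\,f'(\omega)\ge\log r_1^{-1}$ by a noticeably different mechanism. The paper fixes an \emph{explicit} $\widetilde D$, namely the unique real root of the truncated Moran equation $r_1^{\widetilde D}+\cdots+r_{J-1}^{\widetilde D}=1$, and shows by contradiction that for any zero $s_0$ with $\widetilde D<{\rm Re}(s_0)<D$ one has ${\rm Re}(r_j^{s_0})\ge 0$ for every $j$ (if one real part were negative, the remaining terms would sum to at most $\sum_{j=1}^{J-1}r_j^{\sigma_0}<1$, contradicting $\sum_j{\rm Re}(r_j^{s_0})=1$); then ${\rm Re}\,f'(s_0)=\sum_j\log r_j^{-1}\,{\rm Re}(r_j^{s_0})\ge\log r_1^{-1}$ follows immediately, with no case distinction. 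You bypass termwise nonnegativity: your estimate ${\rm Re}\,f'(\omega)\ge f'(\sigma)-\log r_J^{-1}\bigl(\sum_j r_j^\sigma-1\bigr)$ is correct and uniform in $t$ because the imaginary part of $\omega$ enters only through the forced relation $\sum_j r_j^\sigma\cos(t\log r_j)=1$, and it reduces everything to continuity at $\sigma=D$, at the price of a non-constructive $\widetilde D$ and a separate (vacuous) treatment of the equal-ratio case $f'(D)=\log r_1^{-1}$, which you handle correctly. Both versions serve the rest of the paper equally well, since Lemma \ref{screen} only needs \emph{some} $\widetilde D<D$ with the stated residue bound; the paper's route buys an explicit strip and the structural fact ${\rm Re}(r_j^{s_0})\ge0$, yours buys a shorter computation. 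One small caution: your opening identity $f'(\omega)=\log r_1^{-1}+\sum_{j\ge2}r_j^{\omega}(\log r_j^{-1}-\log r_1^{-1})$ is true but does not by itself yield the bound, since ${\rm Re}(r_j^{\omega})$ could a priori be negative; it is your second, $\sigma$-only estimate that actually carries the proof, and it does.
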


\begin{lemma}\label{screen}
There exist strictly increasing, real sequences $\{\alpha_k\}_{k \in \mathbb{Z}}$ and $\{\beta_k\}_{k \in \mathbb{Z}}$ with $\alpha_k < \beta_k < \alpha_{k+1}$ for all $k$, $\alpha_0 < 0 < \beta_0$ and \[\alpha_{k+1}-\alpha_{k} > \frac{\pi}{\log r_J^{-1}}, \qquad (k \in \mathbb{Z}) \] and there exist $\sigma_L, \, \sigma_R$ with $\max\{\widetilde D,d-1 \} <\sigma_L<\sigma_R< D$, such that \\ $\zeta(s)$ is uniformly bounded for all $k\in\mathbb{Z}$ on the (oriented) segments
\begin{eqnarray*}
\begin{array}{ll}
  \gamma_k^1:=[\sigma_R +{\bf i} \beta_{k-1},\sigma_R +{\bf i} \alpha_{k}], & \gamma_k^2:=[\sigma_R +{\bf i} \alpha_{k},\sigma_L +{\bf i} \alpha_{k}], \\
  \gamma_k^3:=[\sigma_L +{\bf i} \alpha_{k},\sigma_L +{\bf i} \beta_{k}], & \gamma_k^4:=[\sigma_L +{\bf i} \beta_{k},\sigma_R +{\bf i} \beta_{k}].
\end{array}
\end{eqnarray*}
\end{lemma}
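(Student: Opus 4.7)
The plan is to build the staircase screen by exploiting that $f(s):=1-\sum_{j=1}^{J} r_j^{s}$, whose zeros are exactly the poles of $\zeta$ in the relevant strip, is a bounded almost-periodic function in the imaginary direction. First I would fix any $\sigma^{*}\in(\max\{\widetilde D,d-1\},D)$ and pick $\sigma_L<\sigma^{*}<\sigma_R$ in this interval. Since the real parts of poles of $\zeta$ form a countable set, a generic perturbation of $\sigma_L,\sigma_R$ ensures that no pole of $\zeta$ lies on either vertical line ${\rm Re}\,s=\sigma_L$ or ${\rm Re}\,s=\sigma_R$. In the closed strip $\overline S=\{s:\sigma_L\le{\rm Re}\,s\le\sigma_R\}$, the function $f$ is bounded and is a finite exponential sum, so a Jensen-type estimate yields a uniform zero-density bound: there is $N_0\in\mathbb{N}$ such that the number of zeros of $f$ in any horizontal sub-slab $\overline S\cap\{t_0\le{\rm Im}\,s\le t_0+h\}$, with $h:=\pi/\log r_J^{-1}$, is at most $N_0$, independently of $t_0\in\mathbb{R}$.

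Next I would build the heights $\{\alpha_k\},\{\beta_k\}$ by pigeonhole. Partition $\mathbb{R}$ into consecutive ``master'' intervals of length $5h$ indexed by $k\in\mathbb{Z}$, with the $k=0$ master containing $0$. Fix $\rho>0$ small enough (depending on $N_0$ and $h$) so that the set of ``bad'' heights — those within $\rho$ of the imaginary part of some zero of $f$ in $\overline S$ — has measure less than $h/2$ in each master. Then even the first $h$ and the last $h$ of each master contain ``good'' heights. Pick $\alpha_k$ as a good height in the first $h$ of the $k$-th master and $\beta_k$ as a good height in its last $h$, adjusting so that $\alpha_0<0<\beta_0$. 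This ensures $\alpha_k<\beta_k<\alpha_{k+1}$, $\alpha_{k+1}-\alpha_k\ge 4h>h=\pi/\log r_J^{-1}$, and every point of $\gamma_k^1\cup\gamma_k^2\cup\gamma_k^3\cup\gamma_k^4$ sits at distance at least $\rho$ from every zero of $f$.

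The main step is to upgrade this $\rho$-separation into a uniform lower bound $|f(s)|\ge\delta>0$ on every screen segment, which then yields $|\zeta(s)|\le 1/\delta$ uniformly. Here the non-lattice hypothesis is essential: $f$ is Bohr almost periodic in ${\rm Im}\,s$ on $\overline S$, so for every $\epsilon>0$ there is an $L>0$ such that every imaginary interval of length $L$ contains an $\epsilon$-almost-period of $f$. Consequently, up to an $\epsilon$-error in sup norm, the behaviour of $f$ on any screen segment is replicated by its behaviour on finitely many ``model'' segments inside any fixed imaginary window of length $L$; on those finitely many compact segments the $\rho$-separation gives a strictly positive infimum, and choosing $\epsilon$ sufficiently smaller than it produces the desired uniform $\delta$. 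The main obstacle is precisely this uniformity: in the non-lattice case zeros of $f$ may accumulate at ${\rm Re}\,s=D$, and without combining almost periodicity with the uniform zero-density bound, clusters of zeros adjacent to the screen could force $\inf|f|$ on it to vanish.
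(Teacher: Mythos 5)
There is a genuine gap at the point where you claim that every point of $\gamma_k^1\cup\gamma_k^2\cup\gamma_k^3\cup\gamma_k^4$ is at distance at least $\rho$ from every zero of $f(s)=1-\sum_{j}r_j^s$. Your pigeonhole choice of $\alpha_k,\beta_k$ only controls \emph{imaginary parts}: it guarantees that the two horizontal crossings stay $\rho$-away from zeros lying in the strip $\sigma_L\le{\rm Re}\,s\le\sigma_R$, but it says nothing about the vertical pieces. Together, the segments $\gamma_k^1$ (on ${\rm Re}\,s=\sigma_R$) and $\gamma_k^3$ (on ${\rm Re}\,s=\sigma_L$) cover \emph{all} heights, and the heights at which the screen sits on the right line versus the left line are dictated by your fixed length-$5h$ grid, not by where the zeros are. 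In the non-lattice case the real parts of the complex dimensions can accumulate on (indeed be dense in) subintervals of $(\widetilde D,D)$, so for any fixed $\sigma_L,\sigma_R$ there may be zeros of $f$ approaching the lines ${\rm Re}\,s=\sigma_L$ or ${\rm Re}\,s=\sigma_R$ with imaginary parts inside the ranges your vertical segments traverse; then $\inf_{\Gamma}|f|=0$ and no uniform bound on $\zeta=1/f$ holds, no matter how the heights are chosen on your grid. A ``generic'' perturbation of $\sigma_L,\sigma_R$ only removes zeros lying exactly on the lines, which is not enough. (Your final upgrade from $\rho$-separation to $|f|\ge\delta$ via almost periodicity, made rigorous by a normal-families/Rouch\'e argument, is essentially sound --- but it is applied to a separation you have not actually secured.)

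The paper's construction is adaptive rather than generic: the heights are chosen from the arguments $\theta_j(t)\equiv t\log r_j \pmod{2\pi}$. The screen stays on the right line ${\rm Re}\,s=\sigma_R$ only over heights where some $|\theta_j(t)|\ge\psi$, which forces ${\rm Re}\,f(s)\ge\mu>0$ there; over the ``dangerous'' heights where all $\theta_j(t)$ are small (the only places zeros can sit close to ${\rm Re}\,s=D$), it jogs left to $\sigma_L$, where all $\cos\theta_j(t)$ are near $1$ and hence ${\rm Re}\,f(s)\le-\lambda/2<0$; on the horizontal crossings the angles all have one sign and size between $\psi$ and a small constant, so $|{\rm Im}\,f(s)|\ge\sin\psi$. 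These explicit sign and size estimates give the uniform bound directly, with no zero counting, pigeonhole, or almost periodicity, and they work in the lattice case as well (note the lemma does not assume non-lattice, whereas your argument invokes it). To repair your approach you would have to make the left/right jogging adaptive to the zeros (or to the angles, as the paper does); with vertical segments pinned to a fixed grid the claimed uniform bound can simply fail.
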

\begin{figure}[ht]
\begin{center}
\includegraphics[scale=0.85]{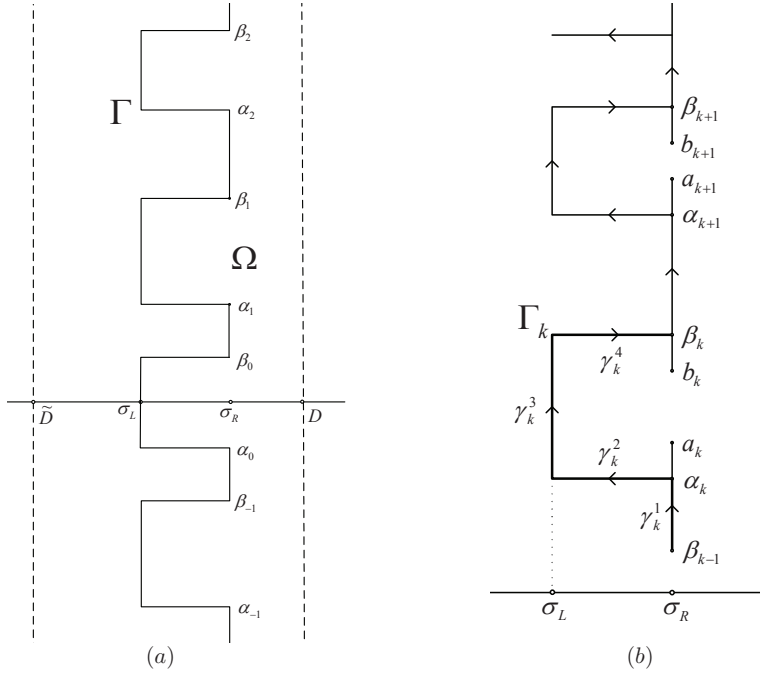}
\end{center}
\caption{The path $\Gamma$} \label{Fig:gamma}
\end{figure}

Let $\Gamma_k$ be the concatenation of the segments $\gamma_k^l,\, l=1,2,3,4$ and $\Gamma$ be the path obtained by the concatenation of all $\Gamma_k,\ k \in \mathbb{Z}$. Let $\Omega$ be the open region between $\Gamma$ and the line ${\rm Re} (s)=D$. Then, by Lemma \ref{strip} and Lemma \ref{screen}, there exists $K>0$ such that \[|\zeta(s)| \leq K, \, \text{ for all } s \in \Gamma\] and \[\left \vert {\rm res}(\zeta(s);\omega)\right \vert \leq K \ \text{ for all poles } \omega \in \Omega \text{ of } \zeta.\]

(As there are too many constants in the sequel, we will use the letter $K$ for any of them, though they may differ in the appearing context.)

As $\zeta(s)$ is analytic in $\{s\,|\, {\rm Re}(s)>D\}$, all the poles of $\zeta$ lie in the half plane $\{s\,|\, {\rm Re}(s) \leq D\}$, and by \cite[Theorem 2.17]{LaFra}, $D$ is the only pole of $\zeta$ with real part $D$. Now, the integral in Lemma \ref{yunus} can be expressed as follows:

\begin{lemma}
\label{CauchyL}
\[\frac{1}{2 \pi {\bf i}}\int_{c-{\rm i}\infty}^{c+{\rm i}\infty} \zeta_{\mathcal{T}}(s,\varepsilon)={\rm res}(\zeta_{\mathcal{T}} (s,\varepsilon);D)+\sum_{\omega \in \Omega \cap \mathfrak{D}} {\rm res}(\zeta_{\mathcal{T}} (s,\varepsilon);\omega) + \frac{1}{2 \pi {\bf i}}\int_{\Gamma}\zeta_{\mathcal{T}} (s,\varepsilon)ds. \]
\end{lemma}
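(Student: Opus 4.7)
The identity is a residue-theorem contour shift, with the non-routine parts being the bookkeeping of enclosed poles and the control of the horizontal ``connectors'' that close the contour. For each large $N$ I form a closed, positively oriented contour $C_N$ consisting of the vertical segment $R_N$ on $\{\mathrm{Re}(s)=c\}$ from $c+i\beta_{-N}$ to $c+i\beta_{N}$, the finite portion $\Gamma_N$ of $\Gamma$ with imaginary part in $[\beta_{-N},\beta_{N}]$ (truncated for definiteness at the horizontal screen pieces $\gamma_{-N}^{4}$ and $\gamma_{N}^{4}$), and two horizontal connectors $H_{N}^{\pm}$ joining the corresponding endpoints of $\Gamma_N$ and $R_N$.

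Inside $C_N$ the poles of $\zeta_{\mathcal{T}}(\cdot,\varepsilon)$ are exactly $\{D\}$ together with those $\omega\in\Omega\cap\mathfrak{D}$ with $\mathrm{Im}(\omega)\in[\beta_{-N},\beta_{N}]$. The pole at $s=D$ is simple because, by \cite[Theorem 2.17]{LaFra}, $D$ is the unique pole of $\zeta$ on the line $\{\mathrm{Re}(s)=D\}$, and the Steiner kernel is analytic at $D$. The integer poles $\{0,1,\dots,d-1\}$ of the Steiner kernel all lie strictly to the left of $\Gamma$ thanks to the condition $\sigma_{L}>d-1$ built into Lemma \ref{screen}, and the pole at $s=d$ lies to the right of $R_N$ since $c<d$; neither contributes. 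Cauchy's theorem then produces the truncated version of the identity, with $\mathrm{res}(\zeta_{\mathcal{T}}(s,\varepsilon);D)$ and the partial residue sum on one side and $\frac{1}{2\pi i}\oint_{C_N}$ on the other.

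The main obstacle is showing that $\int_{H_{N}^{\pm}}\zeta_{\mathcal{T}}(s,\varepsilon)\,ds\to 0$ as $N\to\infty$. Using the factorization $\zeta_{\mathcal{T}}(s,\varepsilon)=\zeta(s)\,\varepsilon^{d-s}\sum_{m=0}^{d}\frac{g^{s-m}}{s-m}\kappa_{m}(G)$, the Steiner-kernel factor is $O(1/|\mathrm{Im}(s)|)$ and $|\varepsilon^{d-s}|=\varepsilon^{d-\mathrm{Re}(s)}$ is bounded on the finite range $\mathrm{Re}(s)\in[\sigma_{L},c]$. For $|\zeta(s)|$ the argument splits along $\mathrm{Re}(s)$: on $[D,c]$ the absolutely convergent series gives $|\zeta(s)|\le (1-\sum_{j}r_{j}^{\mathrm{Re}(s)})^{-1}$, a bound uniform in $\mathrm{Im}(s)$ and finite away from $s=D$; on $[\sigma_{L},D]$ the height $\pm\beta_{\pm N}$ is precisely the height of a horizontal screen segment $\gamma_{\pm N}^{4}$, so Lemma \ref{screen} supplies the uniform bound $|\zeta(s)|\le K$. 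Together these estimates give $\int_{H_{N}^{\pm}}=O(1/|\beta_{N}|)\to 0$, which is the one place where the delicate construction of $\Gamma$ is actually used.

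Finally, taking $N\to\infty$: $\int_{R_N}$ tends to the improper integral of Lemma \ref{yunus}, $\int_{\Gamma_N}$ tends to $\int_{\Gamma}\zeta_{\mathcal{T}}(s,\varepsilon)\,ds$, and the truncated residue sum exhausts $\{D\}\cup(\Omega\cap\mathfrak{D})$. The absolute convergence of the full residue series on the right-hand side is underwritten by the vertical pole spacing $\pi/\log r_{J}^{-1}$ (Lemma \ref{screen}), the uniform residue bound of Lemma \ref{strip}, and the decay of the Steiner-kernel residue factor $\varepsilon^{d-\omega}\sum_{m}\frac{g^{\omega-m}}{\omega-m}\kappa_{m}(G)$ evaluated at each simple pole $\omega\in\Omega$. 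Rearranging the equality yields the identity claimed in Lemma \ref{CauchyL}.
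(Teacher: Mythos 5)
Your overall strategy (truncate, apply the residue theorem, kill the horizontal connectors, let the truncation height go to infinity) is the same as the paper's, but the crucial step --- the bound on the horizontal connectors --- has a genuine gap. You close the contour at the heights $\beta_{\pm N}$ of the screen pieces $\gamma_{\pm N}^{4}$ and claim that on the part of the connector with $\mathrm{Re}(s)\in[\sigma_L,D]$ Lemma \ref{screen} gives $|\zeta(s)|\le K$, while on $[D,c]$ the series bound $|\zeta(s)|\le\bigl(1-\sum_j r_j^{\mathrm{Re}(s)}\bigr)^{-1}$ suffices. Neither claim covers the critical region. Lemma \ref{screen} bounds $\zeta$ only on the segments $\gamma_k^l$, whose real parts lie in $[\sigma_L,\sigma_R]$ with $\sigma_R<D$, so it says nothing about $\mathrm{Re}(s)\in(\sigma_R,D]$ at height $\beta_N$. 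And the series bound blows up like $C/(\mathrm{Re}(s)-D)$ as $\mathrm{Re}(s)\to D^{+}$: it is not uniform in $\mathrm{Re}(s)$, is not even integrable across that endpoint, and gives nothing at all at $\mathrm{Re}(s)=D$. Pointwise analyticity of $\zeta$ at $D+{\bf i}\beta_N$ does not rescue you, because in the non-lattice case the poles of $\zeta$ can come arbitrarily close to the line $\mathrm{Re}(s)=D$, so there is no a priori bound on $|\zeta|$ near that line that is uniform in $N$; this is exactly the delicate point the lemma has to address, and as written your $O(1/|\beta_N|)$ estimate for $\int_{H_N^{\pm}}$ is unsupported precisely where it matters.

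The paper sidesteps this by importing \cite[Theorem 3.26]{LaFra}: there is a sequence of heights $\rho_n\to\infty$ on which $\zeta$ is uniformly bounded along the \emph{entire} horizontal lines $\mathrm{Im}(s)=\pm\rho_n$; closing the contour at these special heights, the connector integrals are $O(\rho_n^{-2})$ by the kernel bound and vanish in the limit. Your choice of heights $\beta_{\pm N}$ could in fact be salvaged without that citation, but it requires redoing the computation in part (iv) of the proof of Lemma \ref{screen} on the longer segment: at height $\beta_N$ every angle satisfies $-\sqrt{\lambda/(1+\lambda)}\le\theta_j(\beta_N)\le-\psi$, hence for all $\sigma_L\le\sigma\le c$ one has $\mathrm{Im}\bigl(f(\sigma+{\bf i}\beta_N)\bigr)=-\sum_j r_j^{\sigma}\sin\theta_j(\beta_N)\ge\sin\psi\,\sum_j r_j^{c}>0$, which gives a bound on $|\zeta|$ uniform in $N$ over the whole connector and then your $O(1/|\beta_N|)$ (indeed $O(1/\beta_N^2)$ via the $K/|s|^2$ kernel estimate) goes through. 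That argument, however, is not in your proposal, and without it (or the appeal to \cite[Theorem 3.26]{LaFra}) the proof is incomplete. A minor additional remark: simplicity of the pole at $s=D$ does not follow from its being the unique pole on the line $\mathrm{Re}(s)=D$; it follows from $f'(D)=\sum_j r_j^{D}\log r_j^{-1}\neq 0$ --- though simplicity is not actually needed for the identity being proved.
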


The integral over $\Gamma$ on the right-hand side above is absolutely convergent and can be estimated as follows:
\begin{lemma}
\label{CauchyGamma}
$\displaystyle\int_{\Gamma} \vert \zeta_{\mathcal{T}} (s,\varepsilon)\vert \vert ds \vert = O(\varepsilon^{d-\sigma_R}) \text{ as } \varepsilon \to 0^+.$
\end{lemma}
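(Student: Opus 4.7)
The plan is to factor the integrand as $\zeta_{\mathcal{T}}(s,\varepsilon) = \zeta(s)\,\varepsilon^{d-s}\,h(s)$, where
\[
h(s) := \sum_{m=0}^{d} \frac{g^{s-m}}{s-m}\kappa_m(G).
\]
By Lemma \ref{screen} we already have $|\zeta(s)| \leq K$ uniformly on $\Gamma$, so only $|\varepsilon^{d-s}\,h(s)|$ needs further control. Because $|\varepsilon^{d-s}| = \varepsilon^{d-\mathrm{Re}(s)}$, and for small $\varepsilon$ this factor is maximized on the strip $\sigma_L \leq \mathrm{Re}(s) \leq \sigma_R$ at $\mathrm{Re}(s) = \sigma_R$, the entire task reduces to bounding $|h(s)|$ so sharply in $|\mathrm{Im}(s)|$ that its integral over all horizontal and vertical pieces of $\Gamma$ sums to a convergent series times $\varepsilon^{d-\sigma_R}$.

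The crucial point is to extract the decay $|h(s)| = O\bigl(g^{\mathrm{Re}(s)}\,|s|^{-2}\bigr)$ as $|\mathrm{Im}(s)|\to\infty$, uniform on the strip. This comes from the continuity of $V_G^-(\varepsilon)$ at $\varepsilon = g$, which forces
\[
\sum_{m=0}^{d-1}\kappa_m(G)\, g^{d-m} = -\kappa_d(G), \qquad\text{i.e.,}\qquad \sum_{m=0}^{d} g^{d-m}\kappa_m(G) = 0.
\]
Writing $h(s) = g^{s} \sum_{m=0}^{d} \frac{g^{-m}\kappa_m(G)}{s-m}$ and expanding $(s-m)^{-1} = s^{-1}\bigl(1 + m/s + O(|s|^{-2})\bigr)$, the leading $s^{-1}$ coefficient equals $\sum_m g^{-m}\kappa_m(G) = g^{-d}\sum_m g^{d-m}\kappa_m(G) = 0$. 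This cancellation kills the leading term and yields the claimed $|s|^{-2}$ estimate (an alternative derivation is two integrations by parts of $h(s) = \int_0^g \varepsilon^{s-d-1} p(\varepsilon)\,d\varepsilon$ with $p(\varepsilon) := \sum_m \kappa_m(G)\varepsilon^{d-m}$, using $p(g)=0$).

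With both bounds in hand, I estimate piece by piece. On a vertical segment $\gamma_k^{1}$ or $\gamma_k^{3}$, at $\mathrm{Re}(s) = \sigma \in \{\sigma_L,\sigma_R\}$, the length is uniformly bounded and $|\mathrm{Im}(s)| \sim |k|$, so $\int |\zeta_{\mathcal{T}}|\,|ds| \leq K_1\,\varepsilon^{d-\sigma} g^{\sigma}/k^2$; since $\varepsilon < g$ and $\sigma_L < \sigma_R$, one has $\varepsilon^{d-\sigma_L} g^{\sigma_L} \leq \varepsilon^{d-\sigma_R} g^{\sigma_R}$, so both contributions are $O(\varepsilon^{d-\sigma_R}/k^2)$. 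On a horizontal segment $\gamma_k^{2}$ or $\gamma_k^{4}$ at height $\sim k$, $\mathrm{Re}(s)$ traverses the bounded interval $[\sigma_L,\sigma_R]$; the function $\sigma \mapsto \varepsilon^{d-\sigma} g^{\sigma} = \varepsilon^{d}(g/\varepsilon)^{\sigma}$ is monotone, so its integral is at most $(\sigma_R - \sigma_L)\,\varepsilon^{d-\sigma_R} g^{\sigma_R}$, and the $1/k^2$ factor from $h$ survives unchanged. Summing over $k \in \mathbb{Z}\setminus\{0\}$ yields a convergent $\sum k^{-2}$, while the finitely many pieces near $k=0$ contribute $O(\varepsilon^{d-\sigma_R})$ by compactness. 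Combining these gives $\int_\Gamma |\zeta_{\mathcal{T}}(s,\varepsilon)|\,|ds| = O(\varepsilon^{d-\sigma_R})$. The main obstacle throughout is the $|s|^{-2}$ decay of $h(s)$: a naive termwise bound gives only $O(|s|^{-1})$, producing $O(1/|k|)$ per segment and a divergent $\sum 1/|k|$; it is precisely the Steiner-like continuity identity at $\varepsilon = g$ that rescues one full order and makes the estimate work.
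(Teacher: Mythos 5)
Your overall strategy is the paper's own: factor out $\zeta(s)$ and use the uniform bound $|\zeta(s)|\leq K$ on $\Gamma$ from Lemma \ref{screen}, and, crucially, extract the $|s|^{-2}$ decay of $h(s)=\sum_{m=0}^{d}\kappa_m(G)\,g^{s-m}/(s-m)$ from the continuity of $V_G^-$ at $\varepsilon=g$, i.e.\ $\sum_{m=0}^{d}\kappa_m(G)g^{d-m}=0$. This cancellation is exactly Remark \ref{remark1} of the paper, where $h(s)=g^{s-d}P(s)/Q(s)$ with $\deg Q=d+1$ and $\deg P\leq d-1$, yielding the estimate (\ref{toplamsinir}); your identification of this as the decisive point (a naive bound only gives $O(|s|^{-1})$) is correct, as is your handling of $\varepsilon^{d-{\rm Re}(s)}\leq \varepsilon^{d-\sigma_R}$ and of the horizontal segments $\gamma_k^2,\gamma_k^4$, whose heights satisfy $|\alpha_k|,|\beta_k|\geq(|k|-1)\pi/\log r_J^{-1}$ by Lemma \ref{screen}.

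There is, however, one step you assert without justification: that the vertical segments have uniformly bounded length. For $\gamma_k^3$ this is fine, since $\beta_k-\alpha_k=(b_k-a_k)+4\psi/|\log r_1|\leq 2\psi/|\log r_J|+4\psi/|\log r_1|$. But $\gamma_k^1$ has length $\alpha_k-\beta_{k-1}$, essentially the gap between consecutive intervals $I_k$ on which all the angles $\theta_j(t)$ are small, and nothing in Lemma \ref{screen} bounds these gaps from above; the lemma only provides the lower bound $\alpha_{k+1}-\alpha_k>\pi/\log r_J^{-1}$. An upper bound is in fact true, but only via a relative-density (Kronecker/Bohr almost-periodicity) argument that neither you nor the paper establishes, so as written your estimate ``length bounded, $|{\rm Im}(s)|\gtrsim|k|$, hence $O(\varepsilon^{d-\sigma_R}/k^2)$ per vertical piece, then $\sum k^{-2}<\infty$'' has a hole for the $\gamma_k^1$ pieces. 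The repair is easy and is precisely what the paper does: instead of (length)$\times$(supremum of the integrand), integrate the pointwise bound $K\varepsilon^{d-\sigma_R}/(\sigma_L^2+t^2)$ in $t$ along each vertical segment; since these segments project to disjoint $t$-intervals, the sum over $k$ is dominated by $K\varepsilon^{d-\sigma_R}\int_{-\infty}^{\infty}dt/(\sigma_L^2+t^2)$, which requires no control whatsoever on the gap lengths. With that substitution your argument is complete and coincides with the paper's proof.
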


This means that we will get rid of this term in the evaluation of the limit $V_{\mathcal{T}}^{-}(\varepsilon) \varepsilon^{D-d}$ as $\varepsilon \to 0^+$: $ O(\varepsilon^{d-\sigma_R}) \varepsilon^{D-d}=o(1) \text{ since } \sigma_R < D.$

We have
\[
\lim_{\varepsilon \to 0^+} {\rm res}(\zeta_{\mathcal{T}} (s,\varepsilon);D) \varepsilon^{D-d}=\frac{\sum\limits_{m=0}^d \frac{g^{D-m}}{D-m}\kappa_m(G)}{\sum\limits_{j=1}^J  r_j^D \log r_j^{-1} }
,\]
where the numerator of the right-hand side is different from zero by Remark \ref{remark2} below and the denominator is obviously non-zero. Therefore, the proof of first part of Theorem \ref{thmcontent} will be settled by the following lemma:

\begin{lemma}\label{lemma6}
$\lim\limits_{\varepsilon \to 0^+} \varepsilon^{D-d} \sum\limits_{\omega \in \Omega \cap \mathfrak{D}} {\rm res}(\zeta_{\mathcal{T}} (s,\varepsilon);\omega)=0$.
\end{lemma}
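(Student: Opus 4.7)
The plan is to bound each residue and then control the resulting infinite series by a splitting argument, with the bulk of the work lying in establishing sharp decay of a certain Mellin-type factor.

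First I would note that, by Lemma \ref{strip}, each pole $\omega\in\Omega$ of $\zeta$ is simple, and that the remaining factor $\varepsilon^{d-s}\sum_{m=0}^d \frac{g^{s-m}}{s-m}\kappa_m(G)$ of $\zeta_{\mathcal{T}}$ is holomorphic at $\omega$ (the trivial poles $s=0,1,\dots,d-1$ lie to the left of $\Omega$ thanks to $\sigma_L>d-1$ from Lemma \ref{screen}, and $s=d$ lies to the right since $D<d$). Hence
\[
\varepsilon^{D-d}\,{\rm res}(\zeta_{\mathcal{T}}(s,\varepsilon);\omega)={\rm res}(\zeta;\omega)\cdot\varepsilon^{D-\omega}\cdot P(\omega),\qquad P(\omega):=\sum_{m=0}^d \frac{g^{\omega-m}}{\omega-m}\kappa_m(G),
\]
and Lemma \ref{strip} yields $\bigl|\varepsilon^{D-d}\,{\rm res}(\zeta_{\mathcal{T}};\omega)\bigr|\le K\,\varepsilon^{D-{\rm Re}\,\omega}\,|P(\omega)|$.

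The crucial step is to upgrade the naive $|P(\omega)|=O(|{\rm Im}\,\omega|^{-1})$ to $|P(\omega)|=O(|{\rm Im}\,\omega|^{-2})$, uniformly in ${\rm Re}\,\omega\in[\sigma_L,D]$. This is where the Steiner-like hypothesis intervenes: continuity of $V_G^-$ at $\varepsilon=g$ forces $\sum_{m=0}^d g^{d-m}\kappa_m(G)=0$, equivalently $\sum_{m=0}^d g^{-m}\kappa_m(G)=0$. Using $\frac{1}{\omega-m}=\frac{1}{\omega}+\frac{m}{\omega(\omega-m)}$,
\[
P(\omega)=g^\omega\sum_{m=0}^d\frac{g^{-m}\kappa_m(G)}{\omega-m}=g^\omega\sum_{m=0}^d\frac{m\,g^{-m}\kappa_m(G)}{\omega(\omega-m)},
\]
so the leading $1/\omega$ contribution cancels and the desired quadratic decay follows with some constant $C$. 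Coupled with a uniform bound, via Jensen's formula applied to $1-\sum_j r_j^s$ and the boundedness of $\zeta$ on $\Gamma$ established in Lemma \ref{screen}, on the number of poles of $\zeta$ inside each horizontal strip $\{\alpha_k\le{\rm Im}\,s\le\alpha_{k+1}\}$, this implies absolute convergence of $\sum_{\omega\in\Omega\cap\mathfrak D}(1+|{\rm Im}\,\omega|)^{-2}$.

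To conclude, given $\eta>0$ I would choose $N$ so large that $\sum_{|{\rm Im}\,\omega|>N}(1+|{\rm Im}\,\omega|)^{-2}<\eta/(2KC)$; since $\varepsilon^{D-{\rm Re}\,\omega}\le 1$ for $\varepsilon\in(0,1]$, this tail bound is uniform in $\varepsilon$. The residual collection $\{\omega\in\Omega\cap\mathfrak D:|{\rm Im}\,\omega|\le N\}$ is finite by meromorphicity, and in the non-lattice case $D$ is the \emph{unique} pole of $\zeta$ with ${\rm Re}\,s=D$ by \cite[Theorem 2.17]{LaFra}, so $\delta:=D-\max\{{\rm Re}\,\omega:|{\rm Im}\,\omega|\le N,\ \omega\in\Omega\cap\mathfrak D\}>0$ and the finite sum is bounded by a constant times $\varepsilon^\delta$, which tends to $0$ as $\varepsilon\to 0^+$. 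The main obstacle I anticipate is precisely the quadratic decay of $P$: without the algebraic identity $\sum_m g^{d-m}\kappa_m(G)=0$ coming from continuity of the inner tube function at the inradius, the linear growth of the counting function for complex dimensions would render the residue series divergent.
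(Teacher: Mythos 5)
Your proposal is correct and follows essentially the same path as the paper's proof: you bound each residue by $K\varepsilon^{D-\mathrm{Re}\,\omega}|P(\omega)|$ using the simplicity of the poles and the residue bound of Lemma \ref{strip}, you obtain the quadratic decay $|P(\omega)|=O(|\omega|^{-2})$ from the continuity of $V_G^-$ at $\varepsilon=g$ (your partial-fraction cancellation of the $1/\omega$ term is exactly the content of the paper's Remark \ref{remark1}, where the same identity forces $\deg P\le d-1$ against $\deg Q=d+1$ in (\ref{polynom}), yielding (\ref{toplamsinir})), and you then split the series into a tail, made uniformly small in $\varepsilon\le 1$ via a linear pole-counting bound, and a finite part bounded by a constant times $\varepsilon^{\delta}$ with $\delta>0$ coming from \cite[Theorem 2.17]{LaFra}. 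The only real divergence is the counting ingredient: the paper simply cites \cite[Theorem 3.6]{LaFra} for $Cn-M\le\#(\mathfrak D\cap\{|\mathrm{Im}\,s|<n\})\le Cn+M$, whereas you propose a self-contained Jensen-formula argument using $|f|\ge 1/K$ on $\Gamma$. That route works, but as stated your claim of a \emph{uniform} bound per strip $\{\alpha_k\le\mathrm{Im}\,s\le\alpha_{k+1}\}$ is slightly imprecise: Lemma \ref{screen} only gives a \emph{lower} bound on $\alpha_{k+1}-\alpha_k$, so a priori the strips could be tall and the Jensen bound is proportional to the strip height; you should either count poles per unit height (disks of fixed radius centered on $\Gamma$, which does give $N(T)=O(T)$ and hence convergence of $\sum(1+|\mathrm{Im}\,\omega|)^{-2}$), or note that the $\alpha_k$-gaps are in fact bounded above by relative density of almost periods. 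With that small repair (or by citing the counting theorem as the paper does), your argument is complete and coincides with the paper's.
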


Now we consider the lattice case. In this case we can use simply a vertical line ${\rm Re}(s)=\sigma < D$ (with $\sigma$ sufficiently close to $D$) instead of the complicated $\Gamma$ and applying the same procedures we can arrive at the formula \[ V_{\mathcal{T}}^-(\varepsilon)=\sum_{n=-\infty}^{\infty} {\rm res}(\zeta_{\mathcal{T}} (s,\varepsilon);D+{\bf i} np) + \frac{1}{2 \pi {\bf i}}\int_{\sigma-{\bf i}\infty}^{\sigma+{\bf i}\infty}\zeta_{\mathcal{T}} (s,\varepsilon)ds, \]
where $p=2\pi/\log r$ (with $\log r$ being the generator of the group $\sum_{j=1}^J (\log r_j) \mathbb{Z}$). As in Lemma \ref{CauchyGamma}, the integral on the right-hand side is $O(\varepsilon^{d-\sigma})$ as $\varepsilon \to 0^+$, so that we can omit this term. The non-real complex dimensions emerging on the line ${\rm Re}(s)= D$ will now cause oscillations and prevent the function $\varepsilon ^{D-d}  V_{\mathcal{T}}^-(\varepsilon)$ to have a limit as $\varepsilon \to 0^+$:
\begin{eqnarray*}
\varepsilon ^{D-d}  V_{\mathcal{T}}^-(\varepsilon)&=&\frac{1}{\sum\limits_{j=1}^J  r_j^D \log r_j^{-1}} \sum_{n \in \mathbb{Z}} \varepsilon^{-{\bf i}np} \sum\limits_{m=0}^d \frac{g^{D+{\bf i}np-m}}{D+{\bf i} n p-m}\kappa_m(G)\\
&=:&\frac{1}{\sum\limits_{j=1}^J  r_j^D \log r_j^{-1}} \sum_{n \in \mathbb{Z}} a_n \varepsilon^{-{\bf i}np}=\frac{1}{\sum\limits_{j=1}^J  r_j^D \log r_j^{-1}} \sum_{n \in \mathbb{Z}}a_n e^{{\bf i}npx}
\end{eqnarray*}
by change of variable $x=-\log \varepsilon$. By \ref{polynom} below at most $d-1$ of $a_n$ can vanish and by \ref{toplamsinir} $\sum_{n \in \mathbb{Z}} |a_n| < \infty $, so that the above Fourier series uniformly converges, is non-constant and oscillates as $x \to \infty$ ($\varepsilon \to 0^+$). Thus a lattice fractal is never Minkowski measurable. However, the average Minkowski content always exists and can be calculated as follows (as $V_C(\varepsilon) - V_C(0)$  does not contribute):
\begin{eqnarray*}
\mathcal{M}_{\rm av} &=& \lim_{T \to \infty} \frac{1}{\log T} \int_{1/T}^1 \varepsilon^{D-d} V_F(\varepsilon) \frac{d\varepsilon}{\varepsilon}
= \lim_{T \to \infty} \frac{1}{\log T} \int_{1/T}^1 \varepsilon^{D-d} V_{\mathcal{T}}^-(\varepsilon) \frac{d\varepsilon}{\varepsilon} \\
&=& \lim_{T \to \infty} \frac{1}{\log T} \frac{1}{\sum\limits_{j=1}^J  r_j^D \log r_j^{-1}} \sum\limits_{n \in \mathbb{Z}} \int_0^{\log T} a_n e^{{\bf i} npx} dx= \frac{a_0}{\sum\limits_{j=1}^J  r_j^D \log r_j^{-1}},
\end{eqnarray*}
where the third equality we use the uniform convergence.
\section{Proof of Lemmas}\label{proofs}
\begin{proof}[Proof of Lemma \ref{yunus}]
Proof of a more general version of this lemma can be found in \cite{arsiv}. For convenience of the reader we repeat the main steps below, omitting the justification of technical details. We have
\[V_{\mathcal{T}}^{-} (\varepsilon)=\displaystyle \sum_{n=0}^{\infty}V_{T_n}^{-}(\varepsilon)  = \displaystyle \sum_{n=0}^{\infty} \sum_{w \in W_n} V_{r_w G}^{-}(\varepsilon),\]
where $r_w G$ is a copy of $G$ scaled by $r_w$. Recall that $V_G^-(\varepsilon)$ is given as in \ref{polinom}. A simple calculation shows
\[V_{rG}^{-}(\varepsilon)= \left \{
\begin{array}{cl}
  \displaystyle \sum_{m=0}^{d-1}\kappa_m(G) r^m \varepsilon^{d-m} &, \text{  for } \varepsilon < r g \\
  -r^d \kappa_d(G) &,\text{  for } \varepsilon \geq r g.
\end{array}
 \right.
\]

We calculate the Mellin transform of $\displaystyle V_{\mathcal{T}}^{-} (\varepsilon)/\varepsilon^d$: The Mellin transform $\mathfrak{M}[f;s]$ of $f:(0,\infty)\rightarrow \mathbb{R}$ is given by
$
\mathfrak{M}[f;s]=\int_0^{\infty} f(x) x^{s-1} dx.
$
A routine calculation shows that
\begin{equation}\label{mellin}
\mathfrak{M}\left[\frac{V_{rG}^{-}(\varepsilon)}{\varepsilon^d};s\right]=r^s  \sum_{m=0}^{d} \kappa_m(G) \frac{g^{s-m}}{s-m} ,\quad \text{ for } d-1<{\rm Re}(s)<d.
\end{equation}
Therefore, for $D<{\rm Re}(s)<d$,
\begin{eqnarray*}
\mathfrak{M}\left[\frac{V_{\mathcal{T}}^{-}(\varepsilon)}{\varepsilon^d};s \right]&=&\int_0^{\infty}  \sum_{n=0}^{\infty} \sum_{w \in W_n} V_{r_w G}^{-}(\varepsilon)\, \varepsilon^{s-d-1} ds=  \sum_{n=0}^{\infty} \sum_{w \in W_n} \int_0^{\infty} V_{r_w G}^{-}(\varepsilon)\, \varepsilon^{s-d-1} ds\\
&=&\sum_{n=0}^{\infty} \sum_{w \in W_n}  r_w^s  \left(\sum_{m=0}^{d} \kappa_m(G) \frac{g^{s-m}}{s-m}\right)=\zeta(s)\sum_{m=0}^{d} \kappa_m(G) \frac{g^{s-m}}{s-m}.
\end{eqnarray*}

Taking the inverse Mellin transform, we obtain
\[
\frac{V_{\mathcal{T}}^{-}(\varepsilon)}{\varepsilon^d}= \frac{1}{2 \pi {\bf i}}\int_{c-\bf{i} \infty}^{c+\bf{i} \infty} \left(\zeta(s)\sum_{m=0}^{d} \kappa_m(G) \frac{g^{s-m}}{s-m}\right)\, \varepsilon^{-s} ds,
\]
where $c$ satisfies $D<c<d$. Hence the claim is proved.
\end{proof}

In passing we note the following results which will be useful for us later. \begin{remark}\label{remark1}
Putting $r=1$ in \ref{mellin} gives
\begin{equation}\label{polynom}
\int_0^{\infty} V_G^- (\varepsilon) \, \varepsilon^{s-d-1} d \varepsilon= \sum_{m=0}^{d} \kappa_m \frac{g^{s-m}}{s-m} = g^{s-d} \sum_{m=0}^{d} \kappa_m \frac{g^{d-m}}{s-m}=g^{s-d}\frac{P(s)}{Q(s)},
\end{equation}
where $Q(s)=s(s-1)\cdots(s-d)$ is a polynomial of degree $d+1.$
A crucial observation is that the degree of the polynomial $P(s)$ is at most $d-1$. This is a consequence of the continuity of $V_G^-(\varepsilon)$ at $\varepsilon=g$: $\sum_{m=0}^{d-1} \kappa_m g^{d-m} = -\kappa_d$.

Therefore, for any $\sigma_1, \sigma_2$ with $d-1< \sigma_1 < \sigma_2  < d$, there exists $K>0$ such that \begin{equation}\label{toplamsinir}
\left \vert \sum_{m=0}^{d} \kappa_m \frac{g^{s-m}}{s-m} \right \vert \leq \frac{K}{|s|^2} \qquad \text{ for } \sigma_1 \leq {\rm Re}(s) \leq \sigma_2.
\end{equation}
\end{remark}
\begin{remark} \label{remark2}
Putting $s=D$ in \ref{polynom} gives \[\sum_{m=0}^{d} \kappa_m \frac{g^{D-m}}{D-m}= \int_0^{\infty} V_G^-(\varepsilon) \, \varepsilon^{D-d-1} d\varepsilon,\]
which shows that $\sum_{m=0}^{d} \kappa_m \frac{g^{D-m}}{D-m}$ can not be zero.
\end{remark}

\begin{proof}[Proof of Lemma \ref{strip}]
Recall that the contractivity ratios are assumed to be ordered as  $1>r_1\geq r_2\geq \cdots \geq r_J>0$. Let $D$ satisfy the Moran equation: $r_1^D+r_2^D+\cdots +r_J^D=1$. We define $\widetilde{D}$ to be the unique real solution of the equation $r_1^{\widetilde{D}}+r_2^{\widetilde{D}}+\cdots +r_{J-1}^{\widetilde{D}}=1$. It is clear that $\widetilde{D}<D$.

Let $f(s)=1-(r_1^s+r_2^s+\cdots +r_J^s)$, so that $\zeta(s)=1/f(s)$.
Let $s_0=\sigma_0+{\bf i}t_0$ be a zero of $f(s)$ in the strip
$\{s \,| \, \widetilde{D}< {\rm {Re}} (s) < D\}$. We will first show that $
{\rm {Re}}(r_j^{s_0}) \geq 0,$ for all $j=1,2,\cdots ,J :$

Since $s_0$ is a zero of $f$, we have $r_1^{s_0}+r_2^{s_0}+\ldots +r_J^{s_0}=1$. Taking real parts, we obtain
\begin{equation}\label{Real}
\textstyle \sum_{j=1}^{J} {\rm Re}(r_j^{s_0})=1.
\end{equation}
Suppose that for some $j_0$, we have ${\rm
{Re}}(r_{j_0}^{s_0})<0.$ Then,
\begin{equation*}
\sum_{j=1}^{J} {\rm Re}(r_j^{s_0}) < \sum_{j=1, j \neq j_0}^{J}
{\rm Re}(r_j^{s_0}) \leq \sum_{j=1,j \neq j_0}^{J} r_j^{\sigma_0}
\leq \sum_{j=1}^{J-1} r_j^{\sigma_0} < \sum_{j=1}^{J-1}
r_j^{\widetilde{D}}=1.
\end{equation*}
This contradicts to $(\ref{Real})$.

The nonnegativity of ${\rm Re}(r_j^{s_0})$ and \ref{Real} implies that
\begin{equation*}
{\rm {Re}}(f'(s_0))= \sum_{j=1}^{J} \log r_j^{-1}\ {\rm
{Re}}(r_j^{s_0}) \geq  \log r_1^{-1} \sum_{j=1}^{J} {\rm
{Re}}(r_j^{s_0}) = \log r_1^{-1} .
\end{equation*}
Thus, the zero of $f(s)$ (and therefore the pole of $\zeta(s)$) at $s=s_0$ is simple. Moreover,
\begin{equation*}
|{\rm{res}}(\zeta(s); s_0)|= \left | \frac{1}{f'(s_0)}\right | \leq
\left | \frac{1}{{\rm {Re}}(f'(s_0))} \right | \leq \frac{1}{\log
r_1^{-1}}.
\end{equation*}
\end{proof}

\begin{proof}[Proof of Lemma \ref{screen}]
With $\widetilde{D}$ as in Lemma \ref{strip},
choose $\sigma_L < D$ such that $\sigma_L > \max \{\widetilde{D},d-1 \} $.
Let $ r_1^{\sigma_L}+r_2^{\sigma_L}+\cdots +r_J^{\sigma_L}=:1+\lambda$. Let $0<\psi<\frac{\pi}{2}$ be chosen such that
\begin{equation}
3\,\psi\, \frac{|\log r_J |}{|\log r_1|}< \sqrt{\frac{\lambda}{1+\lambda}}
\label{conditon_beta}
\end{equation}
and let
\begin{equation}\label{mu}
\mu=\frac{\psi^2 r_J^D}{8}.
\end{equation}
Then there exists a unique real number $\sigma_R$ such that $r_1^{\sigma_R}+r_2^{\sigma_R}+\cdots +r_J^{\sigma_R}=1+\mu$. Note that
\[
\mu < \frac{\psi^2}{8}< 9 \psi^2 \frac{|\log r_J |^2}{|\log r_1|^2}<\frac{\lambda}{1+\lambda}<\lambda,
\]
hence $\sigma_L<\sigma_R$.

For $s=\sigma +{\bf i} t$, let $-\pi \leq \theta_j(t)< \pi$ be the angle of $r_j^s$: $\theta_j(t) \equiv t \log r_j $ (mod $2\pi$).

To construct the sequences $\{\alpha_k\}$ and $\{\beta_k\}$, we first determine the points $s=\sigma_R+{\bf i} t$ on the line $ {\rm Re} (s)=\sigma_R$ for which $|\theta_j(t)|<\psi$ for all $j=1,2,...,J$.

\begin{figure}[ht]
\centering
\includegraphics[scale=1]{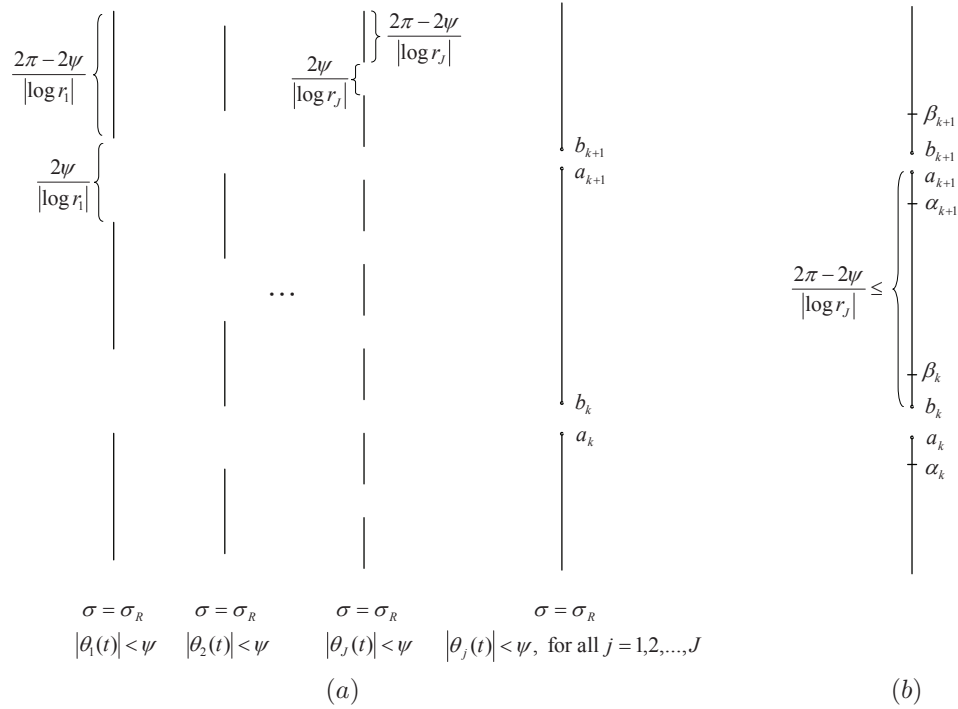}
\caption{Construction of the sequences $\{a_k\}$,  $\{b_k\}$, $\{\alpha_k\}$ and $\{\beta_k\}$.}\label{Fig:sigmas}
\end{figure}

The set $\{t \,| \ |\theta_j(t)|<\psi \, \text{ for all } j=1,2,\dots, J\}$ is a union of countably many disjoint open intervals. That is
\[
\{t \,| \, |\theta_j(t)|<\psi \, \text{ for all } j=1,2,\dots, J\}=\bigcup_{k \in \mathbb{Z}}I_k=: \bigcup_{k \in \mathbb{Z}}(a_k,b_k),
\]
with $|I_k|=b_k-a_k \leq 2 \psi/|\log r_J|$ and $a_{k+1}-b_k\geq (2 \pi - 2 \psi)/|\log r_J|$.

We define $\alpha_k:=a_k- 2 \psi/|\log r_1|$  and $\beta_k:=b_k+ 2 \psi/|\log r_1|$ (see Fig.\ref{Fig:sigmas}b). Clearly, $\alpha_k < \beta_k$. The inequality $\beta_k < \alpha_{k+1}$ follows from \[
2  \frac{2\psi}{|\log r_1|}<\frac{2 \pi - 2 \psi}{ |\log r_J |}.\] This inequality is a consequence of the following inequalities (the second one being \ref{conditon_beta}): \[
\psi\, \left(\frac{2 |\log r_J |}{|\log r_1|}+1\right)< 3\,\psi\, \frac{|\log r_J |}{|\log r_1|}<\sqrt{\frac{\lambda}{1+\lambda}}<1<\pi.
\]
Moreover $\alpha_{k+1}-\alpha_k = a_{k+1}-a_k >a_{k+1}-b_k\geq (2\pi-2\psi)/|\log r_J | > \pi/|\log r_J |$.

We will prove that $\zeta(s)$ is uniformly bounded on the (oriented) segments $\gamma_k^l, (l=1,2,3,4$ and $k\in \mathbb{Z})$ (see Fig. \ref{Fig:gamma}b). This will follow from the following estimates (recall that $f(s)=1-(r_1^s+r_2^s+\cdots +r_J^s)$ and $\zeta(s)=1/f(s)$):
\begin{enumerate}
\item[i)] ${\rm Re}(f(s))\geq \mu$ for $s \in \gamma_{k}^1$,
\item[ii)] ${\rm Im}(f(s))\leq -\sin \psi$ for $s \in \gamma_{k}^2$,
\item[iii)] ${\rm Re}(f(s))\leq-\frac{\lambda}{2}$ for $s \in \gamma_{k}^3$,
\item[iv)] ${\rm Im}(f(s))\geq \sin \psi$ for $s \in \gamma_{k}^4$.
\end{enumerate}

We begin with i): For $s=\sigma_R+{\bf i} t \in \gamma_k^1$, we have $|\theta_j(t)| \geq \psi$ for at least one $j=j_0$. Using \ref{mu} and the inequality $\cos \theta \leq 1-\theta^2/4$ for $-\pi/2 \leq \theta \leq \pi/2$, we get
\begin{eqnarray*}
{\rm Re}(r_{j_0}^s)&=& r_{j_0}^{\sigma_R} \cos \theta_{j_0}(t) <  r_{j_0}^{\sigma_R} \cos \psi \\
&\leq&   r_{j_0}^{\sigma_R} \left(1-\frac{\psi^2}{4}\right)= r_{j_0}^{\sigma_R} \left(1-\frac{2 \mu}{r_J^D}\right)
<  r_{j_0}^{\sigma_R} -2 \mu \frac{r_{j_0}^{D}}{r_J^D} \leq r_{j_0}^{\sigma_R}-2 \mu.
\end{eqnarray*}
Hence,
\begin{eqnarray*}
 {\rm Re} \left ( \sum_{j=1}^{J} r_j^s \right ) &=& {\rm Re} (r^s_{j_0})+ {\rm Re} \left ( \sum_{j=1,j \neq j_0}^{J} r_j^s \right )\\
 &\leq&  r_{j_0}^{\sigma_R}-2\mu+\sum_{j=1,j \neq j_0}^{J} r_j^{\sigma_R}=\sum_{j=1}^{J} r_j^{\sigma_R}-2\mu=1+\mu-2\mu=1-\mu.
\end{eqnarray*}
Therefore ${\rm Re}(f(s))\geq \mu$.

We now prove ii): We first show that for each $j=1,2,\dots,J$, $\theta_j(\alpha_k)$ satisfies $ \psi \leq \theta_j(\alpha_k)\leq \sqrt{\frac{\lambda}{1+\lambda}}$. Fix $j \in \{1,2,\dots,J\}$. By definition of $a_k,$ we have $-\psi \leq \theta_j(a_k)\leq \psi$. Therefore, there exists $m \in \mathbb{Z}$ such that $2\pi m - \psi \leq a_k \log r_j \leq 2 \pi m + \psi$. Then, since $\alpha_k=a_k- 2 \psi/| \log r_1|$,
\[
2 \pi m - \psi + \frac{2 \psi}{|\log r_1|} |\log r_j | \leq \alpha_k \log r_j \leq 2 \pi m + \psi + \frac{2 \psi}{| \log r_1|} |\log r_j|.
\]
Using \ref{conditon_beta} and noting that $| \log r_j| \geq | \log r_1 |$, we obtain
\[
2 \pi m + \psi \leq \alpha_k \log r_j \leq 2 \pi m + \sqrt{\frac{\lambda}{1+\lambda}}.
\]
Therefore $\psi \leq \theta_j(\alpha_k)\leq \sqrt{\frac{\lambda}{1+\lambda}}<1<\frac{\pi}{2} $.

Now, for $s=\sigma+ {\bf i} \alpha_k \in \gamma_k^2$, we have $\sigma_L \leq \sigma \leq \sigma_R<D$ and
\[
{\rm Im}(f(s))=-\sum_{j=1}^J r_j^{\sigma} \sin \theta_j(\alpha_k) \leq -\sin \psi \sum_{j=1}^J r_j^{\sigma}\leq -\sin \psi \sum_{j=1}^J r_j^{D}=-\sin \psi.
\]

We now prove iii): Reasoning as we did in the proof of part ii, it can be easily shown that  $-\sqrt{\frac{\lambda}{1+\lambda}} \leq \theta_j(\beta_{k}) \leq -\psi$ and for $\alpha_k \leq t \leq \beta_k$, we have $-\sqrt{\frac{\lambda}{1+\lambda}} \leq \theta_j(t) \leq \sqrt{\frac{\lambda}{1+\lambda}}$, for every $j \in \{1,2,\dots,J\}$. For $s\ \in \gamma_k^3$, we have $ s=\sigma_L + {\bf i} t ,\ \alpha_k \leq t \leq \beta_k $. Noting that $\cos \theta \geq 1-\theta^2/2$,
 \begin{eqnarray*}
 {\rm Re} (f(s)) =1-  \sum_{j=1}^{J} r_j^{\sigma_L} \cos \theta_j(t)&\leq& 1-\cos \left (  \sqrt{\frac{\lambda}{1+\lambda}} \right ) \sum_{j=1}^{J} r_j^{\sigma_L}\\
 &\leq &1-\left(1-\frac{1}{2}\frac{\lambda}{1+\lambda} \right)(1+\lambda)=-\frac{\lambda}{2}.
 \end{eqnarray*}

Finally, we prove the case iv). For $s \in \gamma_k^4$, we have  $s=\sigma + {\bf i} \beta_k,\ \sigma_L\leq \sigma \leq \sigma_R<D$ and $-\sqrt{\frac{\lambda}{1+\lambda}} \leq \theta_j(\beta_{k}) \leq -\psi$. Then \[ {\rm Im} (f(s)) =  -\sum_{j=1}^{J} r_j^{\sigma} \sin \theta_j (\beta_k) \geq -\sin (-\psi)  \sum_{j=1}^{J} r_j^{\sigma}\geq \sin \psi  \sum_{j=1}^{J} r_j^{D}=\sin \psi.
\]
\end{proof}

\begin{proof}[Proof of Lemma \ref{CauchyL}]

By \cite[Theorem 3.26]{LaFra}, there exists an increasing sequence $\{ \rho_n \}_{n=1}^{\infty}$
tending to infinity such that $\zeta(s)$ is uniformly bounded on the lines
${\rm Im}(s)= \pm \rho_n$. That is, there exists $K>0$ such that \[|\zeta(s)|\leq K,\  \ {\rm Im}(s)=\pm \rho_n, \ n=1,2,\dots.\]

By residue theorem,
\begin{eqnarray}\label{rezequation}
\frac{1}{2 \pi {\bf i}}\int_{c-{\bf i} \rho_n}^ {c+{\bf i} \rho_n}
\zeta_{\mathcal{T}}(s,\varepsilon) ds &=&\frac{1}{2 \pi {\bf i}} \int_{L_{n}}
\zeta_{\mathcal{T}}(s,\varepsilon) ds+ \frac{1}{2 \pi {\bf i}}\int_{\Gamma_{n}}
\zeta_{\mathcal{T}}(s,\varepsilon) ds \\ &+& \frac{1}{2 \pi {\bf i}} \int_{L^{\prime}_n}
\zeta_{\mathcal{T}}(s,\varepsilon) ds +\displaystyle \sum\limits_{\omega
\in \,\Omega_{\rho_n}\cap \mathfrak{D}} {\rm {res}}\left( \zeta_{\mathcal{T}}(s, \varepsilon);
\omega \right) \nonumber
\end{eqnarray}
where $L_n=\{ s \ | \ {\rm Im}(s)=\rho_n \} \cap \overline{\Omega}, \ L^{\prime}_n= \{ s\ |\ {\rm Im}(s)=-\rho_n \} \cap \overline{\Omega}, \ \Gamma_n=\{ s \ | \ -\rho_n \leq {\rm Im} (s) \leq \rho_n \} \cap \Gamma$ with appropriate orientations and $\Omega_{\rho_n}=\{ s \ | \ -\rho_n < {\rm Im} (s) < \rho_n \}\cap \Omega$.

Using \ref{toplamsinir} we obtain (for fixed $\varepsilon$)
\[
\left |   \int_{L_{n}} \zeta_{\mathcal{T}}(s,\varepsilon) ds \right | \leq   \int_{L_{n}} |\zeta_{\mathcal{T}}(s,\varepsilon)| |ds| \leq  K \int_{L_{n}}
\frac{K}{|s|^2 }|ds| \leq   \frac{K }{\rho_n^2 } (c-\sigma_L) \to 0,
\]
as $n \to \infty$, since the length of $L_{n}$ is at most $(c- \sigma_L)$ and $\rho_n \to \infty$ as $n \to \infty$. Similarly, $\lim_{n \to \infty}
\int_{L^{\prime}_{n}} \zeta_{\mathcal{T}}(s,\varepsilon) ds =0.$

It will be shown in the next lemma that the integral of
$\zeta_{\mathcal{T}}$ over $\Gamma$ absolutely converges, so letting $n \to \infty$ in \ref{rezequation} gives the desired result.
\end{proof}

\begin{proof}[Proof of Lemma \ref{CauchyGamma}]
Let $R_1 < 0 < R_2$ and let $\Gamma_{R_1}^{R_2}$ be the part of $\Gamma$ that lies in the strip $\{s\,|\, R_1 \leq {\rm Im} (s) \leq R_2 \}$. We will show that, for some $K>0$ (independent of $R_1$ and $R_2$), \[ \int_{\Gamma_{R_1}^{R_2}}\vert \zeta_{\mathcal{T}} (s,\varepsilon)\vert \vert ds \vert  \leq K\ \varepsilon^{d-\sigma_R},\quad \text{ for } \varepsilon <1. \]
Suppose $\Gamma_{R_1}^{R_2}$  have non-empty intersection with $\Gamma_k$ for $k_0 \leq k \leq k_1$. Then  \[
\int_{\Gamma_{R_1}^{R_2}}\vert \zeta_{\mathcal{T}} (s,\varepsilon)\vert \vert ds \vert \leq \sum\limits_{k=k_0}^{k_1} \int_{\Gamma_k}\vert \zeta_{\mathcal{T}} (s,\varepsilon)\vert \vert ds \vert=\sum\limits_{k=k_0}^{k_1}\sum\limits_{l=1}^{4}\int_{\gamma_k^l}\vert \zeta_{\mathcal{T}} (s,\varepsilon)\vert \vert ds \vert.
\]

Recall that, $|\zeta(s) \leq K$ for $s \in \Gamma$ (by Lemma \ref{screen}) and by \ref{toplamsinir}, \[\left \vert \sum_{m=0}^{d} \kappa_m \frac{g^{s-m}}{s-m} \right \vert \leq \frac{K}{|s|^2} \leq \frac{K}{\sigma_L^2+|{\rm Im}s|^2} \qquad \text{ for } \sigma_L \leq {\rm Re}(s) \leq \sigma_R.\]

Now, for $s \in \gamma_k^1$, we have $s=\sigma_{R}+{\bf i} t$, $\beta_{k-1} \leq t \leq \alpha_k$, and \[\int_{\gamma_k^1}\vert \zeta_{\mathcal{T}} (s,\varepsilon)\vert \vert ds \vert \leq K \, \varepsilon^{d-\sigma_R} \int_{\beta_{k-1}}^{\alpha_k} \frac{K}{\sigma_L^2+t^2} dt.
\]
Similarly,  $ \int_{\gamma_{k}^3} \vert \zeta_{\mathcal{T}} (s,\varepsilon)\vert \vert ds \vert \leq
 K \varepsilon^{d-\sigma_R} \int_{\alpha_{k}}^{\beta_{k}}
dt / (\sigma_L^2+t^2)$.
Therefore,
\begin{eqnarray}\label{limgamma13}
\sum_{k=k_0}^{k_1} \int_{\gamma_{k}^1+\gamma_k^3}
\vert \zeta_{\mathcal{T}} (s,\varepsilon) \vert \vert ds
\vert \leq K \varepsilon ^{d-\sigma_R} \int_{\beta_{k_0-1}}^{\beta_{k_1}} \frac{dt}{\sigma_L^2+t^2}\leq K
\varepsilon^{d-\sigma_R}.
\end{eqnarray}

By Lemma \ref{screen},  $\alpha_{k+1}-\alpha_k \geq \frac{\pi}{\log
r_J^{-1}}=:C$, $\alpha_0<0$ and $\alpha_1>0$.  This implies
$|\alpha_{k}| \geq (|k|-1)C$ and $|\beta_k| \geq (|k|-1)C$ for all $|k| \geq 2$.

Now for $s \in \gamma_{k}^2$, we have $s=\sigma+ {\bf i}
\alpha_{k}, \sigma_L\leq \sigma \leq \sigma_R$ and so for $|k| \geq 2$,
\begin{eqnarray*}
\int_{\gamma_{k}^2} \vert \zeta_{\mathcal{T}} (s,\varepsilon)
\vert \vert ds \vert \leq
K \varepsilon^{d-\sigma_R} \int_{\gamma_{k}^2}\frac{|ds|}{|s|^2} &\leq& K \varepsilon^{d-\sigma_R}  \frac{K}{|\alpha_{k}|^2} (\sigma_R-\sigma_L)
\leq \frac{K \varepsilon^{d-\sigma_R}}{(|k|-1)^2 C^2}.
\end{eqnarray*}
A similar inequality holds when $\gamma_k^2$ is replaced with $\gamma_k^4$. Hence
\begin{eqnarray}\label{limgamma24}
\sum_{k=k_0}^{k_1} \int_{\gamma_{k}^2+\gamma_{k}^4}
\vert \zeta_{\mathcal{T}} (s,\varepsilon) \vert \vert ds
\vert  \leq K\varepsilon^{d-\sigma_R}
\end{eqnarray}
Combining (\ref{limgamma13}) and (\ref{limgamma24}) we get the desired
result.
\end{proof}

\begin{proof}[Proof of Lemma \ref{lemma6}]
Let \[h(\varepsilon):= \varepsilon^{D-d} \sum \limits_{\omega \in \Omega \cap \mathfrak{D}} {\rm res}(\zeta_{\mathcal{T}} (s, \varepsilon);\omega)= \sum \limits_{\omega \in \Omega \cap \mathfrak{D}} \varepsilon^{D-\omega} {\rm res}(\zeta(s);\omega) \sum_{m=0}^d \kappa_m \frac{g^{\omega-m}}{\omega-m}. \]
By Lemma \ref{strip} and \ref{toplamsinir}
\begin{equation}\label{hepsilon} |h(\varepsilon)| \leq \sum\limits_{\omega \in \Omega \cap \mathfrak{D}} \frac{1}{|\log r_1}|\varepsilon^{D-{\rm Re}(\omega)} \frac{K}{|\omega|^2}= K \sum\limits_{\omega \in \Omega \cap \mathfrak{D}} \frac{\varepsilon^{D-{\rm Re}(\omega)}}{|\omega|^2}.
\end{equation}

Let $\Omega_n=\{ s \, | \, -n < {\rm Im}(s)<n \} \cap \Omega, \, n=1,2,\dots$. By \cite[Theorem 3.6]{LaFra}, there exists $C,M>0$ such that the number of poles of $\zeta(s)$ in the strip satisfies
\[
C n-M\leq \# \left( \mathfrak{D} \cap \{ -n <{\rm Im}(s)<n \} \right) \leq C n+M
\]
for all $n=1,2,\dots$. Let  $\Pi_1=\Omega_1$ and $\Pi_n=\Omega_n \setminus \Omega_{n-1}$ for $n\geq 2$. Then
\[
\#(\mathfrak{D} \cap \Pi_n) = \# (\mathfrak{D} \cap \Omega_n)\setminus (\mathfrak{D} \cap \Omega_{n-1})\leq C n + M - (C (n-1) -M)=C + 2 M
\]
for $n\geq 2$. For $n=1$, the above inequality clearly holds. Let $\eta >0$ be arbitrary. We will show that $|h(\varepsilon)|<\eta$ for sufficiently small $\varepsilon$: By \ref{hepsilon}, for any $n \geq 1$,
\begin{eqnarray}\label{Piler}
|h(\varepsilon)|\leq K \sum \limits_{\omega \in \Omega_n \cap \mathfrak{D}}\frac{\varepsilon^{D-{\rm Re}(\omega)}}{|\omega|^2} +
K \sum \limits_{\omega \in (\Omega\setminus\Omega_n) \cap \mathfrak{D}}\frac{\varepsilon^{D-{\rm Re}(\omega)}}{|\omega|^2}.
\end{eqnarray}
Now, for $\varepsilon<1$
\[ \sum \limits_{\omega \in (\Omega\setminus\Omega_n) \cap \mathfrak{D}}\frac{\varepsilon^{D-{\rm Re}(\omega)}}{|\omega|^2}
=\sum_{k=n+1}^{\infty} \ \sum \limits_{\omega \in \Pi_k \cap \mathfrak{D}}\frac{\varepsilon^{D-{\rm Re}(\omega)}}{|\omega|^2}
\leq \sum_{k=n+1}^{\infty} \frac{C+2M}{(k-1)^2},\]
since $\varepsilon^{D-{\rm Re}(\omega)}<1$ and for $\omega \in \Pi_k, (k \geq 2)$, we have $|\omega|^2\geq  |{\rm Im}(\omega)|^2\geq (k-1)^2$. Because of the convergence of the series $\sum_{k=2}^{\infty} \frac{1}{(k-1)^2}$, there exists $n_0$ such that the second term on the right-hand side of \ref{Piler} is less than $\eta/2$ for $n=n_0$. To deal with the first term, note that the set $\Omega_{n_0} \cap \mathfrak{D}$ has finitely many elements. Let $\delta:=\min \{ D-{\rm Re}(\omega)\, | \, \omega \in \Omega_{n_0} \cap \mathfrak{D}\}$. Recall that all the poles of $\zeta$, except the one at $s=D$, have real part less than $D$ (see \cite[Theorem 2.17]{LaFra}). Therefore $\delta>0$ and
\[
K \sum \limits_{\omega \in \Omega_{n_0} \cap \mathfrak{D}}\frac{\varepsilon^{D-{\rm Re}(\omega)}}{|\omega|^2}
\leq K \varepsilon^{\delta} \sum \limits_{\omega \in \Omega_{n_0} \cap \mathfrak{D}}\frac{1}{|\omega|^2}\leq K \varepsilon^{\delta}
\]
which is less than $\eta/2$ when $\varepsilon$ is sufficiently small.
\end{proof}

\bibliographystyle{amsplain}

\end{document}